%% file: ex_article.tex
\documentclass[hidelinks,onefignum,onetabnum]{siamart250211}

\input{ex_shared}

\ifpdf
\hypersetup{
  pdftitle={Regular Solutions to the Beckmann Optimal Transport},
  pdfauthor={Hanno Gottschalk \and Tobias J.\ Riedlinger}
}
\fi

\begin{document}
%\author{Hanno Gottschalk \thanks{Institute of Mathematics, technical University Berlin, Berlin 10623, Germany 
%  (\email{\{gottschalk,riedlinger\}@math.tu-berlin.de}).}\and Tobias Riedlinger ${}^\dagger$}

\maketitle

% REQUIRED
\begin{abstract}
Beckmann's problem in optimal transport minimizes the total squared flux in a continuous transport problem from a source to a target distribution. In this article, the regularity theory for solutions to Beckmann's problem in optimal transport is developed utilizing an unconstrained Lagrangian formulation and solving the variational first order optimality conditions. It turns out that the Lagrangian multiplier that enforces Beckmann's divergence constraint fulfills a Poisson equation and the flux vector field is obtained as the potential's gradient. Utilizing Schauder estimates from elliptic regularity theory, the exact Hölder regularity of the potential, the flux and the flow generating is derived on the basis of Hölder regularity of source and target densities on a bounded, regular domain. If the target distribution depends on parameters, as is the case in conditional (``promptable'') generative learning, we provide sufficient conditions for separate and joint Hölder continuity of the resulting vector field in the parameter and the data dimension. Following a recent result by Belomnestny \emph{et al.}, one can thus approximate such vector fields with deep \texttt{ReQu} neural networks in $C^{k,\alpha}$-Hölder norm. We also show that this approach generalizes to other probability paths, like Fisher-Rao gradient flows.
\end{abstract}

% REQUIRED
\begin{keywords}
Optimal transport $\bullet$ Beckmann's problem $\bullet$ Schauder estimates $\bullet$ Conditional generative learning $\bullet$ Universal approximation $\bullet$ Probability paths $\bullet$ Fisher-Rao gradient flow.
\end{keywords}

% REQUIRED
\begin{MSCcodes}
49Q20, 35B65
\end{MSCcodes}

\section{Introduction}
Optimal transport has become a source of inspiration for generative learning. The existence of a transport map that shifts a source distribution -- oftentimes some kind of noise -- to an involved target distribution is a precondition to prove the consistency of generative learning schemes. Such proofs often require the approximation of transport maps with deep neural networks. While universal approximation holds for continuous or Lebsgue integrable maps  \cite{cybenko1989approximation}, quantitative approximation results are required for deriving approximation rates quantitatively, which enters proofs of consistency with explicit rate estimates \cite{belomestny2023simultaneous,puchkin2024rates}. This underpins the importance of regularity results in optimal transport, like e.g. Caffarelli's theorem on the Hölder regularity of the Brenier potential \cite{brenier1991polar,caffarelli1996boundary,villani2008optimal}.

Recent generative learning has focused on flow based transport maps \cite{chen2018neural,lipmanflow,marzouk2024distribution,ehrhardt2025numericalstatisticalanalysisneuralode}, which deform the source distribution continuously into a target distribution via a probability path of measures depending on an auxiliary time parameter\cite{lipmanflow,wald2025flow}, with the linear interpolation between densities as the easiest of such paths. The flow along a path is then achieved by pushing the source measure forward via the flow map associated with an ordinary differential equation. In learning algorithms, the vector fields are then learned by deep neural networks \cite{chen2018neural,lipmanflow}. Again, the regularity of the vector fields determines the expressivity of the neural networks required \cite{marzouk2024distribution}. As recent results provide rates on the approximation of Hölder functions with neural networks with $\mathtt{ReQU}=\mathtt{ReLU}^2$-activation (or more generally $\mathtt{ReLU}^s$ activation) \cite{belomestny2023simultaneous,drygala2025learning}, precise results on the regularity of transport vector fields are of interest, see e.g.  for a recent result based on the Rosenblatt-Knothe map of the probability path \cite{wang2022minimax}.

The vector fields that induce a flow along a prescribed probability path are not unique. If fact, by Liouville's formula, at any time the vector field can be modified by adding a divergence free vector field. In order to avoid such spurious degrees of freedom, optimal transport often imposes additional optimality conditions on the flow like, e.g., minimal $L^p$ norms of the velocity vector fields \cite{COTFNT,SantambrogioFilippo2015OTfA,villani2008optimal}, with $p=2$ a frequent choice. 

The Beckmann problem of continuous optimal transport can be seen as a variant of this strategy. Here usually the simple linear interpolation path is followed. Instead of imposing an optimality condition to minimize the $L^2$-norm of the velocity vector field \cite{COTFNT} or the energy \cite{benamou2000computational}, the optimality in Beckmann's problem concerns the $L^2$-norm of the flux vector field, i.e. the product of the flow generating vector field at time $t$ with the density of the probability path at the same time\cite{SantambrogioFilippo2015OTfA}. What makes this approach attractive is the fact, that one can achieve the linear interpolation flow path with a flux field that is constant in time.

The theory of existence of solutions to Beckmann's problem is well settled \cite{SantambrogioFilippo2015OTfA}. In contrast, results on the regularity of Beckmann's flux fields and associated flow generating vector fields seem to be sparse,  see however \cite{Lorenz2022} for recent results on $L^p$-regularity and \cite{dweik2023optimal} for $W^{1,p}$-regular solutions for transport cost proportional to the distance and probability paths in $L^p$.

In this work, for the case of positive Hölder densities for source and target distributions on a bounded, regular domain, we give optimal results on the Hölder regularity for the Beckmann flux field, and the associated vector field and the flow map using Schauder estimates from elliptic regularity theory \cite{agmon1959estimates,gilbarg1977elliptic}. Elliptic partial differential equations come into play by writing the constrained Beckmann problem into a unconstrained form via the Lagrangian approach and solving the variational equation for the Lagrangian multiplier to Beckmann's divergence constraint. A similar strategy has been applied by Bennau and Brenier \cite{benamou2000computational} for  flows minimizing the potential energy. 

In our case, this implies that the Lagrangian multiplier solves Poisson's equation with zero Neumann boundary conditions and the right hand side equal to the difference of the source and the target density or, in the case of non trivial probability paths, the time derivative of the density of the probability path. The Beckmann flux field then is obtained as the gradient to the Lagrangian multiplier acting as a 'potential'. Existence of the Solution to the Poisson equation follows from Fredholm's alternative \cite{nardi2015schauder} and the optimal regularity results can then be deduced from the classical results of Agmon, Douglas and Nirenberg \cite{agmon1959estimates}. Regularity of vector fields now can be deduced by division of the flux by the density and regularity of the flow maps is derived by numerical integration \cite{effland2020convergence}. In this way, we obtain optimal regularity results for flow fields which are fulfilling the requirements of mathematical studies in generative learning.

In the following we extend our considerations to parametric optimal transport, which relates to conditional generative learning where the target distribution may depend on a given ``prompt''. 
By studying parametric Beckmann problems we derive Hölder continuity in the parameters of the target distribution for the Beckmann vector field, provided the density of the target distribution is sufficiently jointly regular in parameter and data space.
To this end we exploit Schauder estimates and formulate a sufficient requirement that the parametric source and target distributions fulfill an abstract Hölder condition as a Banach-valued function.
Different notions of Hölder continuity are derived and sufficient conditions are identified such that the resulting parametric Beckmann vector field can be uniformly approximated by deep \texttt{ReQU} neural networks in Hölder norms. 
We thereby close a gap in the mathematical understanding of conditional generative learning.
We also show that our construction generalizes to arbitrary regular probability paths with densities lower bounded away from zero, like the certain Fisher-Rao gradient flows.

Our paper is organized as follows: In Section \ref{sec:liouville-formula} we recall the basics of probability densities transforming under the flow induced by a vector field. In Section \ref{sec:Beckmann} we introduce the Beckmann problem and transform it into the unconstrained form in Section \ref{sec:Unconstrained}. Here we also derive our potential equation of Poisson type. Section \ref{sec:Schauder} discusses the solution of the potential equation via the Fredholm alternative and the Schauder estimates for the potential, from which we derive the regularity results for the flux and flow vector fields and the flow maps. 
Section \ref{sec:Paths} generalizes from the interpolating probability path to general paths. 
In Section \ref{sec:Conclusion} we give our conclusions and a short outlook on future research.

\paragraph{Notation}
Throughout this paper, we will denote the vector of spatial partial derivatives of a function $f:\R^d \to \R$, $x \mapsto f(x)$ by $\bm\nabla f$.
For $f$ sufficiently smooth and a multi-index $\bm\beta = (\beta_0, \ldots, \beta_d) \in \N_0^d$ with degree $|\bm\beta| = \sum_{j = 1}^d \beta_j$, we define the $\bm\beta$-derivative of $f$ by
\begin{equation}
    \label{eq:definition-multiindex-derivative}
    \nabla^{\bm\beta} f(x) = \frac{\partial^{|\bm\beta|}}{\partial x_1^{\beta_1}\cdots \partial x_d^{\beta_d}} f(x).
\end{equation}
Vector field functions $\bm{w}: \R^d \to \R^d$ are denoted with bold letters and we write $\nabla \bm{w}:\R^d \to \R^{d \times d}$ for the Jacobian matrix of $\bm{w}$ and $\bm\nabla \cdot \bm{w}: \R^d \to \R$ for the divergence field of $\bm{w}$.
We also define the Laplacian of a scalar function $f$ by $\Delta f = \bm{\nabla} \cdot \bm{\nabla}f = \sum_{j = 1}^d \tfrac{\partial^2}{\partial x_j^2} f$.
In \cref{sec:parametrized-beckmann}, we will deal with functions $f:\R^d \times \R^q \to \R$ and $\bm{w}: \R^d \times \R^q \to \R^d$ which depend on an additional parameter $\theta \in \R^q$.
We respectively denote the parameter gradient vector of $f$ by $\overline{\bm\nabla}f(x, \theta) \in \R^q$, and the multi-index partial derivative by $\overline{\nabla}^{\bm{m}} f(x, \theta) \in \R$, where $\bm{m} \in \N_0^q$ analogously to \cref{eq:definition-multiindex-derivative}.
In the context of joint differentiability on $\R^d\times \R^q$, we denote mixed partial derivatives with multi-index $\bm{m}\in \N_0^{d + q}$ by $\partial^{\bm{m}}$.
For $X$ a Banach space, $k \in \N$ and $f \in C^k(\R^q;X)$, we denote by $D^{\bm{m}} f$ the Fr\'{e}chet partial derivatives of $f$, also analogously to \cref{eq:definition-multiindex-derivative} whenever $|\bm{m}| \leq k$.

\section{Transport Maps and Flows}\label{sec:liouville-formula}
Most modern generative algorithms learn transport maps $\bm\Phi:\Omega\to\Omega$ which map a source distribution $\nu$ to a target distribution $\mu$.
That is, $\mu$, $\nu$ are probability measures on a common domain $\Omega\subseteq \R^d$ with Lipschitz boundary and we require $\bm\Phi_*\nu=\mu$ for $\bm\Phi$ to be a transport map, where for any Borel set $A$,  $\bm\Phi_*\nu(A)=\nu(\bm\Phi^{-1}(A))$ is the image measure of $\nu$ under $\bm\Phi$.
Optimal transport theory deals with the existence and properties of transport maps. 
Flow-based generative models view $\bm\Phi$ as time $t=1$ end points of ODE flows $\bm\Phi_{0, t}(\cdot)|_{t = 1}$ with respect to some velocity vector field $\bm \xi: [0,1]\times\Omega\to \R^d$, i.e.
\begin{equation*}
    \frac{\mathrm{d}}{\mathrm{d} t}\bm\Phi_{s,t}(x) = \bm\xi(t, \bm\Phi_{s,t}(x)), \quad\bm\Phi_{s,s}(x)=x \qquad \forall s \in [0,1].
\end{equation*}
In the following, we assume that $\bm \xi(t, \cdot)\cdot \bm \eta = 0$ holds on the boundary $\partial \Omega$, where $\bm\eta$ is the outward normal vector field of $\Omega$.
Under sufficient regularity of $\bm\xi(t)$, namely that the map $x \mapsto \bm\xi(t, x)$ is Lipschitz continuous for all $t \in [0,1]$, the flow $\bm\Phi_{0,t}(\bm\xi)$ exists and is unique for $t \in [0, 1]$ and $x \mapsto \bm\Phi_{0, t}(\bm\xi)(x)$ is a diffeomorphism for all $t \in [0,1]$.
The flow endpoint $\bm\Phi(\bm\xi) = \bm\Phi_{0,1}(\bm\xi)(\cdot):\Omega \to \Omega$ is then also a diffeomorphism.

For a generic bijective and differentiable map $\bm\Phi$, the density of $\bm\Phi_*\nu$, given that $\nu = \rho_\nu\cdot \dif x$ has a probability density $\rho_\nu$ with respect to the Lebesgue measure $\dif x$, the density of $\bm\Phi_*\nu $ is given by the transformation of densities formula \cite[Thm.\ IV.8.9]{wernerEinfuehrungHoehereAnalysis2009a}
\begin{equation}
    \label{eq:transformation_of_densities}
    \rho_{\bm\Phi} = [\rho_\nu\circ \bm\Phi^{-1}] \cdot \left|\det \nabla(\bm\Phi^{-1})\right|,
\end{equation}
$\det(A)$ denotes the determinant of a quadratic matrix $A$.
Considering the probability path $[\Phi_{0,t}(\bm\xi)]_*\nu$, its density $\rho(t) = \rho_\nu \circ \Phi_{0,t}(\bm\xi)^{-1} \cdot |\det \nabla\Phi_{0,t}(\bm \xi)^{-1}|$ satisfies the (weak) transport equation \cite[Thm.~4.4]{SantambrogioFilippo2015OTfA}
\begin{equation}
\label{eq:continuityEquation}
    \frac{\partial}{\partial t} \rho(t) + \bm\nabla \cdot (\rho(t) \cdot \bm\xi(t, \cdot)) = 0, \quad \text{a.e. in } \Omega.
\end{equation}
Coming back to the transport problem of mapping $\nu$ to $\mu$, where also the latter has a Lebesgue density $\rho_\mu$, any time-independent vector field $\bm w: \Omega \to \Omega$ giving rise to a transport map $\bm\Phi(\bm\xi)$ must necessarily satisfy the divergence property
\begin{equation}
    \label{eq:flux-condition}
    \bm\nabla \cdot  \bm w = \rho_\mu - \rho_\nu, \quad \text{a.e. in } \Omega
\end{equation}
which requires suitable regularity of $\bm w$.
We call a vector field that satisfies \cref{eq:flux-condition} a ``flux field''.
This opens the question when a flow exists that transforms a given source distribution $\nu = \rho_\nu\cdot \dif x$ to a target distribution $\mu = \rho_\mu \cdot \dif x$ and what can be said about the regularity the underlying vector field $\bm\xi$.

\section{Beckmann's problem} \label{sec:Beckmann}
Here we give an account of the above-mentioned regularity problem based on potential theory and Schauder estimates for elliptic partial differential equations (PDE)~\cite{agmon1959estimates,gilbarg1977elliptic}. 
Our solution is related to the so called Beckmann problem in optimal transport \cite[Chapter 4]{SantambrogioFilippo2015OTfA}.  

We are looking for a vector field $\bm w:\Omega \to \R^d$ which models the total in-flux or out-flux of a differential volume inside a bounded domain $\Omega\subseteq \R^d$ with Lipschitz boundary $\partial\Omega$.  
The problem we consider is to solve
\begin{equation}
\label{eq:min_sq_norm_w}
J(\bm w)=\frac{1}{2}\int_\Omega |\bm w|^2\, \mathrm{d}x \rightarrow \min ~, 
\end{equation}
where the minimization takes place under vector fields $\bm w$ which fulfill the constraint
\begin{equation}
\label{eq:divergenceConstraint}
\bm\nabla \cdot \bm w = f ~~\text{on}~~\Omega, ~~\text{and}~~\bm w\cdot \bm\eta=0,~~\text{on}~~\partial\Omega~~(a.e.),    
\end{equation}
where $f = \rho_\nu-\rho_\mu$ and $\bm\eta$ is the outward pointing normal vector field on $\partial\Omega$.
Here, $\bm w$ may be in some Sobolev space $W^{1,p}(\Omega,\R^d)$, $p\in[2,\infty)$, that allows taking a derivative and a trace on $\partial\Omega$ \cite{adams2003sobolev}. 
Note that the classical Beckmann problem replaces $|\bm w|^2$ with $|\bm w|$, but here we exclusively deal with the case of quadratic cost in the flux. 

The existence of optimal solutions $\bm w$ can be proven under suitable conditions but the regularity properties of $\bm w$ may not yet be strong enough to assure the existence of flows connected to $\bm\xi$.
However, the regularity of $\bm w$ is known to experts in the field~\cite[Chapter 4]{SantambrogioFilippo2015OTfA} but to our knowledge nowhere explicitly present in the contemporary literature.
See however~\cite{dweik2022w1,Lorenz2022} for some recent progress in a more irregular setting that we consider here. 
In the following we identify a sufficiently strong set of assumptions that allows us to derive Hölder regularity for the vector field $\bm w$.

Let us shortly outline the differences to the approach of Benamou and Brenier \cite{benamou2000computational}. Here the optimization problem minimizing the kinetic energy of the flow
\[
\frac{1}{2}\int_0^1\int_\Omega |\bm\xi(t, x)|^2 \cdot  \rho(t, x)\, \mathrm{d}x\,\mathrm{d}t\to\min 
\]
is minimized in $\rho$ and $\bm\xi$ and \eqref{eq:continuityEquation} is treated as a constraint, see also \cite{COTFNT} where the modified problem $\frac{1}{2}\int_0^1\int_\Omega|\bm\xi(t, x)|^2\, \mathrm{d}x\,\mathrm{d}t\to\min 
$ is considered as a minimization in $\bm\xi$. In both cases, the non-linearity of the constraint equation \eqref{eq:continuityEquation} imposes analytical challenges, which here can be avoided.  

\subsection{Unconstrained formulation of the Beckmann problem} \label{sec:Unconstrained}

Similar as in \cite{benamou2000computational}, we pass from the constraint formulation of Beckmann's problem to an unrestricted formulation using Langrangian multipliers $u:\Omega\to\R$ and $v:\partial\Omega\to \R$. We thus consider the Lagrangian functional
\begin{equation}
    \label{eq:Lagrangian}
    \mathcal{L}(\bm w,u,v)=\frac{1}{2}\int_\Omega |\bm w|^2\, \mathrm{d}x+\int_\Omega (\bm\nabla \cdot \bm w - f)\,u\, \mathrm{d}x+\int_{\partial\Omega} (\bm\eta \cdot \bm w)\, v\, \mathrm{d}S,
\end{equation}
where $\dif S$ stands for the induced surface volume element on $\partial\Omega$. If $u$ and $v$ are chosen from sufficiently large linear function spaces, the constraint minimization of \eqref{eq:min_sq_norm_w} is equivalent to 
\begin{equation}
    \label{eq:unconstraint_min_w}
    \min_{\bm w} \sup_{u,v} \mathcal{L}(\bm w,u,v)
\end{equation}
In fact, should one of the constraints in \eqref{eq:divergenceConstraint} be violated, one could find suitable Lagrange multipliers $u$, $v$ such that the last two terms in \eqref{eq:Lagrangian} diverge. 
Thus the supremum of these two terms in $u$ and $v$ is infinite, which implies that such a $\bm w$ can not be a minimizer.

We now give a rigorous mathematical formulation of \eqref{eq:unconstraint_min_w} based on the Hölder regularity of the densities of source and target measures.
To this purpose, let \linebreak $C^{k,\alpha}(\Omb,\R^q)$ be the set of $k$ times differentiable functions from $\bar \Omega$ to $\R^q$ with $\alpha$-H\"older continuous $k$-th derivative, $k\in\N_0$, $\alpha \in (0,1]$, $q\in\N$. We utilize the following convention on the Hölder norms
\begin{align*}
 \| f\|_{C^{k,\alpha}}&=\max \left\{\|\nabla^{\bm\beta} f_j\|_{C(\Omb,\R)}:j=1,\ldots,q,\, |\bm\beta|\leq k\right\} \\
 &+\max \left\{\sup_{x \neq x'\in\bar \Omega}\frac{\left|\nabla^{\bm\beta} (f_j(x)-f_j(x'))\right|}{|x-x'|^\alpha}:j=1,\ldots,q,\, |\bm\beta|=k\right\}.
\end{align*}
If $\Omega$ has a $C^{k,\alpha}$ boundary, i.e. the boundary can be straightened by $C^{k,\alpha}$ hemisphere transformations, then we write $g\in C^{k,\alpha}(\partial\Omega)$ if $g:\partial\Omega\to \R$ such that for any of the hemisphere transformations, the mapping from the boundary of the hemisphere to $\partial \Omega$ composed with the hemisphere transformation restricted to the straightened boundary is locally given by a function from $C(\R^{d-1},\R^d)$, see~\cite{agmon1959estimates,gilbarg1977elliptic} for further details.  

\begin{assumptions}
Let $k\in \N_0$ and $\alpha\in(0,1)$.
    \begin{enumerate}[label=A\arabic*)]
        \item Let $\Omega \subseteq \R^d$ be bounded with $C^{k+2,\alpha}$ boundaries.
        \label{assumption:smooth-domain}
        \item Let $\rho_\mu>0$ and $\rho_\nu>0$ on $\Omb$ such that 
        \begin{equation}
            \label{eq:kappaPositive}
            \kappa:=\min\left\{\inf_{x\in \Omb} \rho_\mu(x),\inf_{x\in \Omb} \rho_\nu(x)\right\}>0.
        \end{equation}
        \label{assumption:uniform-lower-bound}
        \item Let $\rho_\nu, \rho_\mu \in C^{k,\alpha}(\Omb,\R)$ for $\alpha \in (0, 1]$.
        \label{assumption:hoelder-differentiable-densities}
    \end{enumerate}
\end{assumptions}
Also, we define the finite joint upper bound $K := \max \{ \sup_{x \in \Omb} \rho_\mu(x), \sup_{x \in \Omb} \rho_\nu(x)\}$ which exists by continuity.

Let us now specify suitable spaces for the optimization variables, i.e. we chose $\bm w\in W^{1,p}(\Omega,\R^d)$, $p\geq 2$, $u\in C^{k+2,\alpha}(\Omb)$ and $v\in C^0(\partial\Omega)$. 
By the Sobolev embedding theorem \cite{adams2003sobolev}, $\bm w$ has a continuous extension to $\Omb$ if $p\geq d$ and admits a trace with values in $L^q(\partial\Omega)$ for $p<d$ if $1\leq q\leq \frac{(d-1)p}{(d-p)}$ if $1\leq p<d$. Furthermore, under the given conditions the integration by parts formula holds for the first derivatives of $\bm w$ multiplied by $u$ \cite{adams2003sobolev}. 

We investigate the first order optimality conditions of  problem \eqref{eq:unconstraint_min_w}
\begin{subequations}
\begin{align}
    \label{eq:first_order_opt_a}
    0&= \frac{\delta}{\delta v}\mathcal{L}(\bm w,u,v)=\bm \eta\cdot \bm w~~\text{on}~~\partial\Omega ~~(a.e.)\\
    \label{eq:first_order_opt_b}
    0&= \frac{\delta}{\delta u}\mathcal{L}(\bm w,u,v)=\bm\nabla\cdot \bm w - f =0~~\text{on}~~\Omega ~~(a.e.)\\
    \label{eq:first_order_opt_c}
    0&= \frac{\delta}{\delta \bm w}\mathcal{L}(\bm w,u,v)=\bm w - \bm \nabla u=0~~\text{on}~~\Omega ~~(a.e.)
\end{align}
\end{subequations}
To derive \eqref{eq:first_order_opt_c}, we used integration by parts by the divergence theorem and \eqref{eq:first_order_opt_a}. 
Since $p\geq 2$, all terms can be interpreted as Fr\'echet derivatives of $\mathcal{L}$ in the respective spaces.  

Combining  \eqref{eq:first_order_opt_a}-- \eqref{eq:first_order_opt_c}, we see that the Lagrangian function $u:\Omega\to\R$ has to solve the Poisson equation with Neumann boundary conditions
\begin{equation}
\label{eq:Poisson}
\Delta u = f \quad \text{on } \Omega, \qquad \bm \nabla u\cdot \bm \eta=0 \quad \text{ on } \partial\Omega,
\end{equation}
as well as $\bm w = \bm \nabla u$.

The occurrence of a potential function $u$ is typical. 
E.g. in the Benamou-Brenier approach \cite{benamou2000computational}, the velocity field $\bm \xi$ is obtained as a gradient of a potential $\bm \xi (t, x) = \bm \nabla u (t, x)$, where $u(t, x)$ fulfills the Hamilton-Jacobi equation $\frac{\partial}{\partial t} u(t, x) + \frac{1}{2}|\bm \nabla u(t, x)|^2=0$. As this equation is non-linear, the regularity theory of the Hamilton Jacobi equation is more involved than in our case \cite{crandall1983viscosity}, in which we only have to deal with an elliptic equation.  

\subsection{Potential theory and Schauder estimates} \label{sec:Schauder}

In this section we derive the existence and regularity of the solution $u$ of the first order optimality conditions of the Beckmann problem in the form \eqref{eq:Poisson}  using elliptic regularity theory~\cite{agmon1959estimates,gilbarg1977elliptic}.
Let us first recall a result on the existence of strong solutions of the Poisson equation with Neumann boundary conditions via the Fredholm alternative:

\begin{theorem}[Existence and $C^{2,\alpha}$ Schauder estimate~\cite{nardi2015schauder}]
\label{theorem:Schauder}
Let $d>2$, $\alpha\in(0,1)$ and $\Omega$ fulfill assumption \ref{assumption:smooth-domain} for $k=0$. 
Let $f\in C^{0,\alpha}(\Omega,\R)$ be given and $g:\partial \Omega\to \R$ be in $C^{1,\alpha}(\partial\Omega,\R)$. 
If $\int_\Omega f\, \mathrm{d}x+\int_{\partial\Omega} g \,\mathrm{d} S=0$,
then there exits a solution $u\in C^{2,\alpha}(\overline{\Omega},\R)$ to the Poisson equation 
with Neumann boundary conditions \cref{eq:Poisson} 
which is unique up to a constant.
Furthermore, there exists a constant $C=C_1(\Omega, d, \alpha)$ such that
\begin{equation}
\label{eq:2ndOrderSchauder}
\left\| u-\frac{1}{|\Omega|}\int_\Omega u\,\mathrm{d}x\right\|_{C^{2,\alpha}(\Omega,\R)}\leq C\left(\| f\|_{C^{0,\alpha}(\Omega,\R)}+\|g\|_{C^{1,\alpha}(\partial\Omega,\R)}\right).
\end{equation}
\end{theorem}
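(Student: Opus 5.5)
We establish existence, uniqueness up to constants, and the Schauder bound for the Neumann problem $\Delta u = f$ in $\Omega$, $\bm\nabla u\cdot\bm\eta = g$ on $\partial\Omega$. The homogeneous case $g=0$ is \eqref{eq:Poisson}. The argument has three steps: a variational weak solution via the Lax--Milgram theorem on the mean-zero subspace, boundary regularity via Agmon--Douglis--Nirenberg, and an estimate with mean removed via a compactness argument. The Fredholm alternative appears as the compatibility condition.

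\textbf{Weak solution.} Let $H=\{u\in H^1(\Omega):\int_\Omega u\,\mathrm{d}x=0\}$ with the bilinear form $a(u,\varphi)=\int_\Omega \bm\nabla u\cdot\bm\nabla\varphi\,\mathrm{d}x$. By the Poincaré--Wirtinger inequality, valid for bounded Lipschitz, hence $C^{2,\alpha}$, domains, $a$ is coercive on $H$. The functional $\ell(\varphi)=-\int_\Omega f\varphi\,\mathrm{d}x+\int_{\partial\Omega} g\varphi\,\mathrm{d}S$ is bounded on $H^1$ by the trace theorem. Lax--Milgram then gives a unique $u\in H$ with $a(u,\varphi)=\ell(\varphi)$ for all $\varphi\in H$. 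The compatibility condition $\int_\Omega f+\int_{\partial\Omega} g=0$ says exactly that $\ell(1)=0$. Hence the identity extends to all $\varphi\in H^1(\Omega)=H\oplus\R$, and $u$ is a weak solution of the Neumann problem. This is the Fredholm alternative in concrete form: the kernel of the Neumann Laplacian is the constants, and the data must annihilate it. Uniqueness up to constants follows by testing the difference of two solutions against itself, which gives $\bm\nabla(u_1-u_2)=0$ on the connected components of $\Omega$.

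\textbf{Regularity.} This is the main obstacle. I would not redo it; instead I would invoke the Agmon--Douglis--Nirenberg boundary Schauder theory~\cite{agmon1959estimates}, as also presented in~\cite{gilbarg1977elliptic} and~\cite{nardi2015schauder}. The Neumann condition satisfies the complementing condition for $\Delta$, so ADN applies. The bootstrap runs in four steps.
\begin{enumerate}
\item Interior regularity gives $u\in C^{2,\alpha}_{\mathrm{loc}}$.
\item Locally flatten the boundary with the $C^{2,\alpha}$ charts from \ref{assumption:smooth-domain}. This turns the problem into a uniformly elliptic operator with $C^{1,\alpha}$ leading coefficients and an oblique derivative condition with $C^{1,\alpha}$ coefficients on a half-ball.
\item Apply $H^2$ regularity, then $W^{2,p}$ regularity for large $p$, then Morrey's embedding to reach $C^{1,\alpha}$.
\item The boundary Schauder estimate for oblique derivative problems then upgrades this to $u\in C^{2,\alpha}(\overline\Omega)$.
\end{enumerate}
Together these yield the a priori estimate
$$\|u\|_{C^{2,\alpha}}\le C\left(\|u\|_{C^0}+\|f\|_{C^{0,\alpha}}+\|g\|_{C^{1,\alpha}(\partial\Omega)}\right).$$
The restriction $d>2$ plays no role in this argument.

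\textbf{Removing the $\|u\|_{C^0}$ term.} Set $\bar u=u-|\Omega|^{-1}\int_\Omega u$, which solves the same problem. I would argue by contradiction. Suppose there are solutions $\bar u_n$ with zero mean, $\|\bar u_n\|_{C^{2,\alpha}}=1$, and data norms tending to $0$. By Arzelà--Ascoli, a subsequence converges in $C^2$ to some $\bar u$. This limit has zero mean, is harmonic, and satisfies the homogeneous Neumann condition, so it is constant, hence $\bar u=0$. But then $\|\bar u_n\|_{C^0}\to0$, and the a priori estimate forces $\|\bar u_n\|_{C^{2,\alpha}}\to0$, contradicting $\|\bar u_n\|_{C^{2,\alpha}}=1$. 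This proves \eqref{eq:2ndOrderSchauder} with $C=C_1(\Omega,d,\alpha)$.
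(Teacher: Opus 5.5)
The paper gives no proof of this theorem; it quotes it from \cite{nardi2015schauder}, and its introduction says only that existence there follows from the Fredholm alternative. Your three-step outline is sound, but one assertion is false as written. With your functional, $\ell(1)=\int_{\partial\Omega} g\,\mathrm{d}S-\int_\Omega f\,\mathrm{d}x$. So $\ell(1)=0$ is the condition $\int_\Omega f\,\mathrm{d}x=\int_{\partial\Omega} g\,\mathrm{d}S$, not the printed hypothesis $\int_\Omega f\,\mathrm{d}x+\int_{\partial\Omega} g\,\mathrm{d}S=0$.

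This cannot be repaired inside the proof. For $\Delta u=f$, $\bm\nabla u\cdot\bm\eta=g$ with outward $\bm\eta$, the divergence theorem forces $\int_\Omega f=\int_{\partial\Omega} g$. Under the printed hypothesis a solution therefore exists only when $\int_{\partial\Omega} g\,\mathrm{d}S=0$. The printed condition matches the convention $-\Delta u=f$, or an inward normal. You should flag this and prove the corrected statement, which agrees with the printed one for $g=0$, the only case the paper uses.

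Likewise, coercivity on the mean-zero subspace and ``unique up to a constant'' both need $\Omega$ connected. Your phrase ``on the connected components'' points the other way: for disconnected $\Omega$ you would need compatibility on each component, and uniqueness only up to locally constant functions.

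In the regularity step, an a priori Schauder estimate presupposes $u\in C^{2,\alpha}(\Omb)$, so it cannot by itself upgrade $C^{1,\alpha}$ to $C^{2,\alpha}$. Pair it with Hölder-space solvability of a coercive perturbation, which runs in three steps.
\begin{enumerate}
\item Solve $\Delta w-w=f-u$, $\bm\nabla w\cdot\bm\eta=g$ in $C^{2,\alpha}(\Omb)$, for instance by the method of continuity used for oblique derivative problems in \cite{gilbarg1977elliptic}. The right-hand side is admissible once $u\in C^{0,\alpha}(\Omb)$.
\item Then $v=w-u\in H^1(\Omega)$ satisfies $\int_\Omega(\bm\nabla v\cdot\bm\nabla\varphi+v\varphi)\,\mathrm{d}x=0$ for all $\varphi\in H^1(\Omega)$.
\item Testing with $v$ gives $v=0$, so $u=w\in C^{2,\alpha}(\Omb)$.
\end{enumerate}
Your compactness argument for removing $\|u\|_{C^0}$ is correct, given connectedness.

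A Fredholm argument run directly in Hölder spaces would go differently. It inverts $u\mapsto(\Delta u-u,\ \bm\nabla u\cdot\bm\eta)$ from $C^{2,\alpha}(\Omb)$ onto $C^{0,\alpha}(\Omb)\times C^{1,\alpha}(\partial\Omega)$ and treats $\Delta$ as a compact perturbation of this map. Index zero and the one-dimensional kernel then identify the range with the compatibility hyperplane. Compared with that, your Lax--Milgram step makes existence, the kernel and the compatibility condition elementary and dimension-free; you are right that $d>2$ plays no role. The cost is that it yields only an $H^1$ solution, so the real work moves into the weak-to-classical boundary regularity. That step ends up resting on the same Hölder-space solvability theory the Fredholm route uses from the start.
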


\cref{theorem:Schauder} is not yet fully satisfactory in light of assumption \ref{assumption:hoelder-differentiable-densities} as it does not exploit higher regularity of $f = \rho_\mu - \rho_\nu$ in case that $\rho_\mu, \rho_\nu\in C^{k,\alpha}(\Omb,\R) $ for $k\geq 1$.
We therefore use classical Schauder estimates for elliptic PDE to derive higher regularity. Application of the classical Schauder estimate by Agmon, Douglis and Nirenberg  to the situation at hand gives the following.

\begin{theorem}[$C^{k,\alpha}$ Schauder estimate~\cite{agmon1959estimates}]
    \label{theorem:SchauderHigherOrder}
 Let $d>2$ and $k\in \N$ and let $f\in C^{k,\alpha}(\Omb,\R)$. 
 Let furthermore $\Omega$ be bounded and fulfill assumption \ref{assumption:smooth-domain}. 
 If there exists a classical solution to \cref{eq:Poisson}, then actually $u\in C^{2+k,\alpha}(\Omb)$ and there is a constant $C = C_2(\Omega, d, k, \alpha)$ such that the following Schauder estimate holds:
 \begin{equation*}
     \| u\|_{C^{2+k,\alpha}(\Omb)}\leq C \left(\|f\|_{C^{k,\alpha}(\Omb)}+\|g\|_{C^{k+1,\alpha}(\Omb)}\right)
 \end{equation*}
\end{theorem}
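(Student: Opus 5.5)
The plan is to derive the estimate from the general boundary Schauder theory of Agmon, Douglis and Nirenberg, applied to the Neumann problem for the Laplacian. I allow a general Neumann datum $g$, i.e.\ $\bm\nabla u\cdot\bm\eta=g$ on $\partial\Omega$, which is the case the right-hand side of the estimate is written for. Two facts about this problem will be used. The Laplacian is uniformly elliptic with constant coefficients. The Neumann operator $\bm\eta\cdot\bm\nabla$ satisfies the complementing (Lopatinskii--Shapiro) condition on $\partial\Omega$, because $\bm\eta$ is never tangential.

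\textbf{Step 1: upgrade the solution.} Start from the classical solution $u$. By Theorem~\ref{theorem:Schauder} it lies in $C^{2,\alpha}(\Omb)$, since $f\in C^{0,\alpha}$, $g\in C^{1,\alpha}$, and the solution is unique up to constants.

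\textbf{Step 2: raise regularity by induction on $k$.}
\begin{itemize}
\item \emph{Interior.} Difference quotients, or differentiating $\Delta u=f$ directly, show that $\partial_j u$ solves $\Delta(\partial_j u)=\partial_j f\in C^{k-1,\alpha}$. Interior Schauder estimates then give $u\in C^{k+2,\alpha}_{\mathrm{loc}}(\Omega)$.
\item \emph{Boundary.} Use the $C^{k+2,\alpha}$ charts from assumption~\ref{assumption:smooth-domain} to flatten $\partial\Omega$ locally. In the new coordinates $u$ solves a uniformly elliptic equation $a^{ij}\partial_{ij}u+b^i\partial_i u=\tilde f$, with $a^{ij}\in C^{k+1,\alpha}$ and $b^i\in C^{k,\alpha}$. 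The boundary condition becomes an oblique condition $\beta\cdot\nabla u=\tilde g$ on $\{x_d=0\}$, with $\beta\in C^{k+1,\alpha}$ and $\beta_d\neq0$.
\item \emph{Tangential derivatives.} Tangential derivatives $\partial_i u$, $i<d$, solve a problem of the same type. Their right-hand sides involve $\partial_i\tilde f$, $\partial_i\tilde g$ and lower-order terms in $u$, which are already controlled in $C^{k-1,\alpha}$ and $C^{k,\alpha}$ respectively. The inductive hypothesis therefore gives $\partial_i u\in C^{k+1,\alpha}$.
\item \emph{Normal derivatives.} Recover the pure normal derivatives from the equation itself, $\partial_{dd}u=(a^{dd})^{-1}(\tilde f-\dots)$. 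This closes the induction step.
\item \emph{Globalise.} A partition of unity subordinate to the charts yields the global estimate
\[
\|u\|_{C^{k+2,\alpha}(\Omb)}\le C\bigl(\|f\|_{C^{k,\alpha}}+\|g\|_{C^{k+1,\alpha}}+\|u\|_{C^0}\bigr).
\]
\end{itemize}

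\textbf{Step 3: remove $\|u\|_{C^0}$.} I would normalise $u$ to mean zero, which is the only sense in which the estimate can hold, since constants lie in the kernel. Then $\|u\|_{C^0}\le\|u\|_{C^{2,\alpha}}$, and \eqref{eq:2ndOrderSchauder} bounds this by $C(\|f\|_{C^{0,\alpha}}+\|g\|_{C^{1,\alpha}})$. An alternative is a compactness/contradiction argument using Arzel\`a--Ascoli and the uniqueness-up-to-constants from Fredholm theory. The constant $C_2$ then depends only on $\Omega,d,k,\alpha$.

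\textbf{Main obstacle.} The hard part is the boundary step. One must check that the flattened oblique-derivative problem keeps the complementing condition uniformly, and that commutators produced by differentiating the variable coefficients land in the right Hölder classes. Both points need exactly $C^{k+2,\alpha}$ regularity of the boundary. In the write-up I would cite this directly as the ADN theorem \cite{agmon1959estimates} for general elliptic boundary problems, rather than redo the perturbation argument.
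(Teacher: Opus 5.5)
Your proposal is correct and follows essentially the same route as the paper: invoke the Agmon--Douglis--Nirenberg global Schauder estimate (Theorem~7.3) for the Neumann problem, which carries an extra $\|u\|_{C^0}$ term, then normalize $u$ to mean zero and absorb that term using the $C^{2,\alpha}$ estimate \eqref{eq:2ndOrderSchauder} of Theorem~\ref{theorem:Schauder}. Your inductive bootstrap (flattening, tangential derivatives, recovering $\partial_{dd}u$ from the equation) is only an unpacking of what the paper takes directly from ADN, and you rightly note that the bound can hold only for the mean-zero normalization, which the paper's proof also uses.
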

\begin{proof}
Within this proof, we use the notation of~\cite{agmon1959estimates}. We apply Theorem 7.3 of this paper in the setting $m=1$, $l_0=2$ and $l=k+2$. All regions $\mathfrak{A}$ and $\mathcal{D}$ are set to $\Omega$ which fulfills the boundary regularity required by the theorem by Assumption A1). We next consider the (principal) symbol $L(\overline{p},p_{d})=|\overline{p}|^2+p_d^2$ in the formal variables $p=(\bar p,p_d)\in\R^{d-1}\times \R$ where we use local coordinates defined by the hemisphere transformations such that the boundary is located in direction $p_d$. As required in \cite[p. 632]{agmon1959estimates}, for $\overline{p}\not=0$, the equation $L(\overline{p},p_{d})=0$ has exactly one root with positive imaginary part in $p_d$, namely $i|\bar p|$. Also condition (1.1) on the same page is trivially fulfilled. The complementing condition on page 633 of this work is on linear independence of the coefficients of diverse boundary condition operators is trivially fulfilled, as we only have one single boundary operator. The smoothness and boundedness of the boundary condition operator is just given by the regularity of $\eta$. Thus the conditions of Theorem 7.3 in our case are fulfilled and we obtain the Schauder estimate for any solution $\tilde u$
\begin{equation*}
    \| \tilde u\|_{C^{2+k,\alpha}(\Omb,\R)}\leq C'(\Omega,d,k,\alpha)\left(\|f\|_{C^{k,\alpha}(\Omb,\R)}+\|g\|_{C^{k+1,\alpha}(\Omb,\R)}+\|\tilde u\|_{C^0(\Omb,\R)}\right). 
\end{equation*}
 Let us now consider the family of solutions $\tilde u\rightarrow \tilde u+c=u$ and choose $c$ such that $\int_\Omega u\, \mathrm{d}x=0$. Now use Theorem \ref{theorem:Schauder} to upper bound the $\|u\|_{C^0(\Omb)}$- term on the right hand side by the right hand side of \eqref{eq:2ndOrderSchauder}. Setting $C_2(\Omega,d,k,\alpha)=C_1(\Omega,d,\alpha)+C'(\Omega,d,k,\alpha)$ and using the obvious inequalities between $C^{k,\alpha}$ H\"older norms for different $k$, we derive the assertion.   
\end{proof}

\subsection{Regular solutions to Beckmann's problem}
The above two theorems can now be applied to our situation with $g = 0$ and $f = \rho_\mu - \rho_\nu$ in order to obtain a Hölder regular solution to the Beckmann problem. We also mention the derived regularity for the vector field $\bm \xi$ and the flow $\bm\Phi_{0,t}(\bm \xi)$. 
The following is the main theorem of this section.
\begin{theorem}[Existence, uniqueness and regularity of solutions of the Beckmann problem]
\label{theorem:Ck_alphaForV}
    Let assumptions \ref{assumption:smooth-domain}, \ref{assumption:uniform-lower-bound} and \ref{assumption:hoelder-differentiable-densities} be fulfilled for $k\in \N_0$. Then,
    \begin{enumerate}[label=(\roman*)]
        \item There exits a potential field $u\in C^{k+2,\alpha}(\overline{\Omega},\R)$  such that $\bm w = \bm \nabla u$ fulfills~\eqref{eq:divergenceConstraint} and $\bm w\in C^{k+1}(\bar \Omega,\R^d)$ with 
        \[
        \|\bm w\|_{C^{k+1,\alpha}(\Omb,\R^d)}\leq C_2(\Omega,d,k,\alpha)\,\|\rho_\mu - \rho_\nu\|_{C^{k,\alpha}(\Omb,\R)}.
        \]
        \item In $W^{1,p}(\Omega,\R^d)$, $p\geq 2$, $\bm w = \bm \nabla u$ is the unique solution to the Beckmann problem  \eqref{eq:min_sq_norm_w} under the constraints \eqref{eq:divergenceConstraint}.
    \end{enumerate}  
\end{theorem}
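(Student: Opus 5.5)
The plan is to treat (i) and (ii) separately: (i) follows from the two Schauder theorems, and (ii) from a convexity/orthogonality argument.

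\emph{Part (i).} Set $f=\rho_\mu-\rho_\nu$ and $g=0$, matching the sign convention of \eqref{eq:Poisson}. By \ref{assumption:hoelder-differentiable-densities} we have $f\in C^{k,\alpha}(\Omb)\subseteq C^{0,\alpha}(\Omb)$. The compatibility condition of Theorem~\ref{theorem:Schauder} reads $\int_\Omega f\,\mathrm{d}x=1-1=0$, since $\rho_\mu$ and $\rho_\nu$ are probability densities. Theorem~\ref{theorem:Schauder} therefore gives a solution $u\in C^{2,\alpha}(\Omb)$ of \eqref{eq:Poisson}, unique up to a constant; we fix the constant by $\int_\Omega u\,\mathrm{d}x=0$.

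For $k\ge1$, this $u$ is a classical solution, so Theorem~\ref{theorem:SchauderHigherOrder} upgrades it to $u\in C^{k+2,\alpha}(\Omb)$ with $\|u\|_{C^{k+2,\alpha}}\le C_2\|f\|_{C^{k,\alpha}}$. For $k=0$ the same bound follows from \eqref{eq:2ndOrderSchauder}, possibly after enlarging $C_2$.

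Put $\bm w=\bm\nabla u$. Then $\bm\nabla\cdot\bm w=\Delta u=f$ and $\bm w\cdot\bm\eta=0$ on $\partial\Omega$, so \eqref{eq:divergenceConstraint} holds. Each component of $\bm w$ is a first derivative of $u$, so the $C^{k+1,\alpha}$ norm of $\bm w$ is bounded by the $C^{k+2,\alpha}$ norm of $u$, because the norm convention takes maxima over components and multi-indices. This gives the stated estimate. Finally, $\bm w\in C^{k+1,\alpha}(\Omb)\subseteq W^{1,p}(\Omega)$ for every $p$, since $\Omega$ is bounded.

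\emph{Part (ii).} Since the first order conditions are only necessary, I would argue directly rather than through the Lagrangian. Let $\bm v\in W^{1,p}(\Omega,\R^d)$ be any admissible field and set $\bm h=\bm v-\bm w$. Then $\bm\nabla\cdot\bm h=0$ a.e. and $\bm h\cdot\bm\eta=0$ a.e. on $\partial\Omega$ in the trace sense. The divergence theorem applies to $u\bm h$ with $u\in C^1(\Omb)$ and $\bm h\in W^{1,p}$, $p\ge2$, giving
\[
\int_\Omega \bm\nabla u\cdot\bm h\,\mathrm{d}x=-\int_\Omega u\,\bm\nabla\cdot\bm h\,\mathrm{d}x+\int_{\partial\Omega}u\,\bm h\cdot\bm\eta\,\mathrm{d}S=0 .
\]
Hence
\[
J(\bm v)=J(\bm w)+\tfrac12\int_\Omega|\bm h|^2\,\mathrm{d}x\ \ge\ J(\bm w),
\]
with equality iff $\bm h=0$ a.e. This shows that $\bm w$ is the minimizer and that it is unique. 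Uniqueness also follows from strict convexity of $J$ on the affine admissible set.

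\emph{Main obstacle.} The delicate step is justifying this integration by parts when the trace of $\bm h$ is only in some $L^q(\partial\Omega)$. This works because $u$ is continuous up to the boundary and the Gauss--Green formula extends by density from smooth fields to $W^{1,p}$. A secondary point is the $k=0$ case and the appearance of $C_2$ there, which I handle as indicated above.
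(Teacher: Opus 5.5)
Your proposal is correct and follows the paper's proof. Part (i) is Theorem~\ref{theorem:Schauder} with $g=0$ and $\int_\Omega f\,\mathrm{d}x=0$, upgraded by Theorem~\ref{theorem:SchauderHigherOrder} for $k\ge1$; part (ii) is the same quadratic expansion of $J$ along $\bm w+\tau(\bm v-\bm w)$. The differences are that you verify the vanishing linear term $\int_\Omega \bm\nabla u\cdot\bm h\,\mathrm{d}x=0$ directly via Gauss--Green, and that you fix the additive constant of $u$ and the $k=0$ constant explicitly; the paper instead just invokes the formally derived condition \eqref{eq:first_order_opt_c}, so your version is the rigorous form of that step.
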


\begin{proof}
 (i) We  apply Theorem \ref{theorem:Schauder} for $g = 0$ since by assumption A4) and $f = \rho_\mu - \rho_\nu$,  $\int_\Omega f\, \mathrm{d}x=0$ meets the conditions of Theorem \ref{theorem:Schauder}. 
 This gives us $u\in C^{2,\alpha}(\Omb)$. For $k\geq 1$, apply Theorem \ref{theorem:SchauderHigherOrder} in addition.  
 From the $C^{k+2,\alpha}$ regularity of $u$, the $C^{k+1,\alpha}$ regularity of $\bm w$ follows.

 (ii) Clearly, $\bm w\in W^{1,p}(\Omega,\R^d)$ where $\bm w$ is the vector field from (i). 
 Let $\bm w'\in W^{1,p}(\Omega,\R^d)$ fulfill \eqref{eq:divergenceConstraint} and assume $\bm w'\neq \bm w$. 
 Obviously, $j(\tau)= J((1-\tau)\bm w+\tau \bm w')$ is a second order polynomial with non vanishing quadratic coefficient $J(\bm w - \bm w') > 0$.
 Thus, $j(\tau)$ has a global and unique minimum on $\R$ where the first derivative vanishes. 
 By the 1st order optimality condition \eqref{eq:first_order_opt_c} evaluated in the direction $\bm w' - \bm w$, the first derivative vanishes for $\tau = 0$. Thus, $J(\bm w)=j(0)<j(1)=J(\bm w')$.
\end{proof}
In the following, we investigate how \cref{theorem:Ck_alphaForV} can be applied to derive regularity of transport vector fields and their respective flow map.

\subsection{Transport Vector Field and Transport Map Regularity}
Given the solution $\bm w = \bm \nabla u$ from \cref{theorem:Ck_alphaForV} which fulfills $\mathrm{div}\, \bm w = \rho_\mu - \rho_\nu$, we may scale $\bm w$ by the density interpolation
\begin{equation}
\label{eq:interpolateDensities}
\rho_t \coloneq (1 - t) \rho_\nu + t \rho_\mu,~~t\in[0,1], ~~\text{on}~~ \Omega.    
\end{equation}
By assumption \ref{assumption:uniform-lower-bound}, $\rho_t$ is positive with positive lower bound $\kappa$ throughout $\Omega$ such that the vector field
\begin{equation}
    \label{eq:beckmannVectorField}
    \bm \xi(t) \coloneq \frac{\bm w}{\rho(t)}
\end{equation}
is well-defined and $\rho_t$ evolves with $\bm w = \rho(t) \cdot \bm \xi(t)$ according to the transport equation \eqref{eq:continuityEquation} with initial condition $\rho_0=\rho_\nu$.
Together with our previous result, we obtain the following.
\begin{theorem}[Regularity of the transport vector field, flow map and flow end point]
\label{thm:regularity-xi-and-phi}
    Let the assumptions \ref{assumption:smooth-domain}, \ref{assumption:uniform-lower-bound} and \ref{assumption:hoelder-differentiable-densities} be fulfilled for $k \in \N_0$.
    Then,
    \begin{enumerate}[label=(\roman*)]
        \item The vector field $\bm \xi$ defined in~\eqref{eq:beckmannVectorField} lies in $C^{k,\alpha}([0,1]\times \Omb,\R^d)$ with $C^{k,\alpha}$-Hölder norm no larger than
        \begin{equation*}
        \|\bm \xi\|_{C^{k,\alpha}([0,1]\times\Omb,\R^d)}\leq C \left(\kappa^{-1} \max\{\|\rho_\nu\|_{C^{k,\alpha}(\Omb,\R)},\|\rho_\mu\|_{C^{k,\alpha}(\Omb,\R)}\}\right)^{2^k+5}, 
        \end{equation*}
        where $C=C_3(\Omega,d,k,\alpha)$ and $\kappa>0$ is defined in \eqref{eq:kappaPositive}.
        \item If $k\geq 1$, $\bm \xi$ generates a flow $\bm\Phi_{0,t}(\bm \xi)$ with flow endpoint $\bm\Phi(\bm \xi)$ which is a transport map, i.e. $\bm\Phi(\bm \xi)_*\nu=\mu$. 
        Furthermore, $\bm\Phi_{0, t}(\bm \xi),\bm\Phi(\bm \xi)\in C^{k,\alpha}(\Omb,\Omb)$. 
    \end{enumerate}
\end{theorem}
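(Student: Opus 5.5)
Part (i) reduces to a quotient estimate in Hölder spaces. By \cref{theorem:Ck_alphaForV}(i), $\bm w=\bm\nabla u\in C^{k+1,\alpha}(\Omb,\R^d)$ with
\[
\|\bm w\|_{C^{k,\alpha}}\le\|\bm w\|_{C^{k+1,\alpha}}\le 2C_2 M,\qquad M:=\max\{\|\rho_\nu\|_{C^{k,\alpha}},\|\rho_\mu\|_{C^{k,\alpha}}\}.
\]
Since $\bm w$ does not depend on $t$, the only work is to bound $1/\rho_t$ jointly in $(t,x)$.

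The function $(t,x)\mapsto\rho_t(x)=\rho_\nu(x)+t(\rho_\mu(x)-\rho_\nu(x))$ is affine in $t$. Hence $\|\rho\|_{C^{k,\alpha}([0,1]\times\Omb)}\le c\,M$ with $c$ depending only on $k$. Every $t$-derivative of order at least two vanishes, and the mixed derivatives $\partial_t\nabla^{\bm\beta}_x\rho=\nabla^{\bm\beta}_x(\rho_\mu-\rho_\nu)$ are controlled by $2M$. The Hölder seminorm in $(t,x)$ splits as $|t-t'|$ times an $x$-sup bound plus an $x$-Hölder bound, and $|t-t'|\le|t-t'|^\alpha$ on $[0,1]$. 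By \ref{assumption:uniform-lower-bound}, $\rho_t\ge\kappa$ throughout.

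I will then use the Faà di Bruno formula for $h\circ\rho$ with $h(s)=1/s$ on $[\kappa,\infty)$. Each derivative $\partial^{\bm m}(1/\rho)$ with $|\bm m|\le k$ is a sum of terms $\rho^{-1-j}\prod_{i=1}^j\partial^{\bm m_i}\rho$. Such a term is bounded by $\kappa^{-1-j}(cM)^j$. The Hölder quotient of the top-order terms is handled by the product rule for Hölder seminorms, $[fg]_\alpha\le\|f\|_\infty[g]_\alpha+[f]_\alpha\|g\|_\infty$, together with $[\rho^{-n}]_\alpha\le n\kappa^{-n-1}[\rho]_\alpha$. Then the Leibniz rule for $\bm\xi=\bm w\cdot(1/\rho)$ together with $\|fg\|_{C^{k,\alpha}}\le C_k\|f\|_{C^{k,\alpha}}\|g\|_{C^{k,\alpha}}$ gives a bound polynomial in $\kappa^{-1}$ and $M$.

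Since $M\ge\kappa$ (as $\|\rho\|_\infty\ge\kappa$), the quantity $Q:=\kappa^{-1}M$ satisfies $Q\ge1$. Every monomial $\kappa^{-a}M^b$ that appears has $a\ge b$ or can be bounded by a power of $Q$ times a constant depending on $|\Omega|$. Here one uses that $\int\rho=1$ forces $\kappa\le|\Omega|^{-1}\le K\le M$, so stray factors of $\kappa$ or $M$ are absorbed into $C_3(\Omega,d,k,\alpha)$. All monomials can therefore be dominated by $Q^{N}$ for one large $N$. The bookkeeping I expect to give $N\le 2^k+5$, since each differentiation at most doubles the count of $\rho$-factors via the quotient rule. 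I regard this exponent tracking as the main obstacle, and will do it by induction on $k$: I will show that $\|1/\rho\|_{C^{k,\alpha}}\le C\kappa^{-1}Q^{2^{k+1}}$ or a similar crude bound, then multiply by the bound on $\bm w$.

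For (ii), with $k\ge1$, part (i) gives that $\bm\xi$ is $C^1$ and hence Lipschitz in $x$ uniformly in $t$. It is also tangent to $\partial\Omega$, since $\bm w\cdot\bm\eta=\bm\nabla u\cdot\bm\eta=0$. So the flow $\bm\Phi_{0,t}$ exists on $[0,1]$ and maps $\Omb$ into itself. By the standard theory of ODE dependence on initial data, the flow inherits $C^{k,\alpha}$ regularity in $x$ from a $C^{k,\alpha}$ field. This follows by differentiating the variational equations $k$ times and applying Grönwall's inequality to the Hölder quotients, or by citing \cite{effland2020convergence}.

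For the transport property, $\rho_t$ solves \eqref{eq:continuityEquation} with $\rho_0=\rho_\nu$. Indeed, $\partial_t\rho_t=\rho_\mu-\rho_\nu$ and $\bm\nabla\cdot(\rho_t\bm\xi)=\bm\nabla\cdot\bm w=f$, and with $f=\rho_\mu-\rho_\nu$ as in the proof of \cref{theorem:Ck_alphaForV} the sign of $f$ must be chosen so that $\partial_t\rho_t+f=0$. The push-forward density $[\bm\Phi_{0,t}]_*\nu$ also solves this equation with the same Lipschitz field. Uniqueness of solutions to the continuity equation for Lipschitz fields then gives $[\bm\Phi_{0,t}]_*\nu=\rho_t\,\dif x$, and at $t=1$ this is $\mu$.
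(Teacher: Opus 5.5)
Your proposal is correct and follows the paper's route. For (i) it uses a H\"older quotient/product estimate for $\bm w/\rho_t$ with $\rho_t\ge\kappa$: the paper just cites \cite[Proposition A7]{asatryan2023convenient}, while you carry it out via Fa\`a di Bruno. For (ii) it uses Lipschitz continuity, tangency $\bm\xi\cdot\bm\eta=0$, \cite{effland2020convergence} and the continuity equation. Your sign remark is right: $\partial_t\rho_t=\rho_\mu-\rho_\nu$ forces $\bm\nabla\cdot\bm w=\rho_\nu-\rho_\mu$ as in \eqref{eq:divergenceConstraint}, not the $\rho_\mu-\rho_\nu$ stated just before the theorem.

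The exponent is not a real obstacle. Your Fa\`a di Bruno terms $\rho^{-1-j}\prod_{i\le j}\partial^{\bm m_i}\rho$ have $j\le|\bm m|\le k$, and the H\"older quotient adds at most one further factor $\kappa^{-1}[\rho]_\alpha$. This gives $\|1/\rho\|_{C^{k,\alpha}}\le C\kappa^{-1}Q^{k+1}$. Combined with $\|\bm w\|_{C^{k,\alpha}}\le 2C_2\kappa Q$, you get $\|\bm\xi\|_{C^{k,\alpha}}\le C\,Q^{k+2}\le C\,Q^{2^k+5}$ since $Q\ge1$; $\kappa$ cancels exactly, so no $|\Omega|$-absorption is needed. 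Drop the doubling induction: its crude $Q^{2^{k+1}}$ bound would overshoot the stated exponent once $k\ge3$.
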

\begin{proof}
(i)  $\rho_t$ is linear affine in $t$ and thus $\rho_t(x)$ is a Hölder function in $C^{k,\alpha}(\overline{\Omega}\times [0,1],\R)$ by assumption \ref{assumption:hoelder-differentiable-densities}. 
Also $\rho_t(x)\geq \kappa$ by assumption \ref{assumption:uniform-lower-bound}. 
We can thus apply (i) and \cite[Proposition A7]{asatryan2023convenient} which provide Hölder continuity for quotients and products to conclude.

 (ii) This follows from (ii) and the fact that for $k\geq 1$, the function $\bm \xi \in C^{k,\alpha}(I \times \Omb,\R)$ is differentiable with bounded first derivative. 
 In particular, it is Lipschitz. 
 By the condition $\bm w\cdot \bm \eta=0$ on $\partial\Omega$, we also obtain $\bm \xi\cdot \bm \eta=0$ at the boundary. 
 Hence the flow $\bm\Phi_{0,t}(\bm \xi)$ applied on some point $x\in\Omega$ never leaves $\Omega$. 
 Along the trajectory, the Lipschitz constant of $\bm \xi$ thus is bounded. 
 We can thus apply standard ODE theory~\cite{HeuserHarro} to prove global existence of a flow $\bm\Phi_{0,t}(\bm \xi)$ for $\bm x\in\Omega$ and $t\in I$.
 That the flow endpoint is a transport map follows from~\eqref{eq:continuityEquation}, \eqref{eq:interpolateDensities} and \eqref{eq:beckmannVectorField}. 
 The $C^{k,\alpha}$-regularity of the flow follows from \cite[Theorem 2.5]{effland2020convergence} for $k=1$. 
 As the authors remark in their proof, their argument can be easily iterated by applying it to equations like $ \frac{\mathrm{d}}{\mathrm{d}t}\left(\nabla \bm\Phi_{s,t}(\bm \xi)\right) = \nabla \bm \xi(t, \bm\Phi_{s,t}(\bm \xi)) \cdot  \left(\nabla \bm\Phi_{s,t}(\bm \xi)\right)$
 and higher order analogues, to derive higher $C^{k,\alpha}$-Hölder regularity for $k\geq 2$.
\end{proof}

\section{Parametric Beckmann Problems
}
\label{sec:parametrized-beckmann}
Modern applications of generative \linebreak learning are frequently concerned with probability distributions and probability densities that depend on external parameters or conditions.
We may envision here language prompts for chat bots or image generation, visual image prompts for applications like inpainting or positional prompts for computer vision models.
Here, we model prompts for conditional generative learning as additional co-variables of the source and target distributions $\rho_\nu$ and $\rho_\mu$ in some parameter set $\Theta \subset \R^q$ with $q \in \N$ and, therefore, of the function $f = \rho_\mu - \rho_\nu$.
In the following sections we investigate the parameter dependence of the target quantities $\bm \xi$ and $\bm\Phi(\bm \xi)$ in the setting where $f$ additionally depends on external parameters $\theta \in \Theta$.

\subsection{Banach-Valued Hölder Functions}
Let $q \in \N$.
We consider $\Theta \subset \R^q$, a bounded, open set with sufficiently smooth boundary (in \cref{sec:Paths}, we will investigate the case of a wider class of time-dependent probability paths where $t \in \Thb = [0, 1]$) such that $\Thb$ is compact and lends itself to notions of of Hölder differentiability.
We define the spaces of abstract functions $f \in C^{l, \beta}(\Thb; C^{k, \alpha}(\Omb))$ that are Hölder differentiable in $\Theta$ with values in a Hölder space as in \cite{lunardi2012analytic}.
That is, for any Banach space $(X, \|\cdot\|_X)$ and a continuous and bounded function $f \in C_{\mathrm{b}}(\Thb;X)$, let 
\begin{equation*}
    [f]_{C^\alpha(\Thb;X)} \coloneq \sup_{\substack{\theta, \vartheta \in \Theta,\\\theta \neq \vartheta}} \frac{\|f(\theta) - f(\vartheta)\|_X}{|\theta - \vartheta|^\alpha}
\end{equation*}
be the Hölder coefficient of $f$.
Then, the space of $\alpha$-Hölder continuous functions $C^\alpha(\Thb;X) \coloneq \{f \in C_{\mathrm{b}}(\Thb;X): [f]_{C^\alpha(\Thb;X)} < \infty\}$ is equipped with the norm 
\[
\|f\|_{C^\alpha(\Thb;X)} \coloneq \|f\|_{C_\mathrm{b}(\Thb;X)} + [f]_{C^\alpha(\Thb;X)},
\] 
where $\|f\|_{C_\mathrm{b}(\Thb;X)} \coloneq \sup_{\theta \in \Theta} \|f(\theta)\|_X$.
Similarly, let the space of functions with bounded Fr\'{e}chet derivatives up to order $k \in \N$ be denoted by 
\begin{equation*}
    \Cb^k(\Thb;X) \coloneq \{f \in C^{\bm m}(\Thb;X): D^{\bm m} f \in \Cb(\Thb;X), \, \forall \bm m \in \N_0^q: |\bm m| \leq k\},
\end{equation*}
where $D^{\bm m}$ denotes the partial (Fr\'{e}chet) derivative operator on functions over $\Thb$ for the multi-index $\bm m$ and equipped with the norm $\|f\|_{\Cb^k(\Thb;X)} \coloneq \max_{|\bm m| \leq k} \|D^{\bm m} f\|_{\Cb(\Thb;X)}$.
Then, the space of $k$ times $\alpha$-Hölder differentiable functions is
\begin{align}
\begin{split}
    C^{k, \alpha}(\Thb;X) \coloneq& \, \{f \in \Cb^k(\Thb;X): D^{\bm m} f \in C^\alpha(\Thb;X), |\bm m| \leq k\}, \\
    \|f\|_{C^{k, \alpha}(\Thb;X)} \coloneq& \, \|f\|_{\Cb^k(\Thb;X)} + \max_{|\bm m| = k}[D^{\bm m} f]_{C^\alpha(\Thb;X)}.
    \label{eq:abstract-hölder}
    \end{split}
\end{align}
With these spaces now set up with $X = C^{k, \alpha}(\Omb)$, we can first conclude respective parameter regularity of the solution $\Theta \ni \theta \mapsto u(\theta) \in C^{2 + k, \alpha}(\Omb)$.

\subsection{Hölder Differentiability of the Beckmann Potential in Time}
\begin{proposition}
    \label{prop:hölder-continuity-u}
    Let $d > 2$, $k \in \N$, $\alpha \in (0, 1)$, $\beta \in (0, 1]$ and $f \in\linebreak  C^{\beta}(\Thb; C^{k, \alpha}(\Omb))$ for $\Omega$ satisfying assumption \ref{assumption:smooth-domain} and $\int_\Omega f(\theta) \dif x = 0$ for all $\theta \in \Theta$.
    Further, for any $\theta \in \Theta$, let $u(\theta) \in C^{2 + k, \alpha}(\Omb)$ denote the (point-wise) mean-value free solution to \cref{eq:Poisson} with right-hand side $f(\theta) \in C^{k, \alpha}(\Omb)$.
    Then, we have $u \in C^\beta(\Thb; C^{2+k, \alpha}(\Omb))$ with $\|u\|_{C^{\beta}(\Thb;C^{2 + k, \alpha}(\Omb))} \leq 2 C_2(\Omega, d, k, \alpha) \|f\|_{C^\beta(\Thb; C^{k, \alpha}(\Omb))}$.
\end{proposition}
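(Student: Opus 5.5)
The plan is to exploit that the solution operator $S: f \mapsto u$ is linear and bounded from the mean-free subspace of $C^{k,\alpha}(\Omb)$ into $C^{2+k,\alpha}(\Omb)$. First I would set up the operator. For each mean-free $f\in C^{k,\alpha}(\Omb)$, \cref{theorem:Schauder} with $g=0$ gives a solution $u\in C^{2,\alpha}(\Omb)$ of \cref{eq:Poisson}, unique up to constants, so the mean-free normalization singles out one solution $Sf$. By \cref{theorem:SchauderHigherOrder}, again with $g=0$, this solution lies in $C^{2+k,\alpha}(\Omb)$ and satisfies $\|Sf\|_{C^{2+k,\alpha}(\Omb)}\le C_2\|f\|_{C^{k,\alpha}(\Omb)}$. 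One should check here that the bound in the proof of \cref{theorem:SchauderHigherOrder} was derived precisely for the mean-free representative, and it was.

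Next I would prove linearity of $S$. If $f_1,f_2$ are mean-free and $a,b\in\R$, then $aSf_1+bSf_2$ solves \cref{eq:Poisson} with right-hand side $af_1+bf_2$, satisfies the homogeneous Neumann condition, and is mean-free. Uniqueness up to constants, together with the normalization, then gives $S(af_1+bf_2)=aSf_1+bSf_2$.

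With this in hand, the Banach-valued statement follows by composition, since $u(\theta)=Sf(\theta)$.
\begin{itemize}
\item For the sup part, $\sup_\theta\|u(\theta)\|_{C^{2+k,\alpha}}\le C_2\sup_\theta\|f(\theta)\|_{C^{k,\alpha}}$.
\item For the Hölder seminorm, take $\theta\ne\vartheta$. The difference $f(\theta)-f(\vartheta)$ is mean-free, so linearity gives $u(\theta)-u(\vartheta)=S(f(\theta)-f(\vartheta))$. Hence
\[
\frac{\|u(\theta)-u(\vartheta)\|_{C^{2+k,\alpha}}}{|\theta-\vartheta|^\beta}\le C_2\frac{\|f(\theta)-f(\vartheta)\|_{C^{k,\alpha}}}{|\theta-\vartheta|^\beta},
\]
and taking the supremum yields $[u]_{C^\beta}\le C_2[f]_{C^\beta}$.
\item Continuity and boundedness of $u:\Thb\to C^{2+k,\alpha}$ follow because $S$ is bounded and linear, so $u\in \Cb$.
\end{itemize}
Adding the two pieces gives $\|u\|_{C^\beta}\le C_2\|f\|_{C^\beta}$, which is even better than the stated factor $2C_2$. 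The factor $2$ leaves slack, for instance if one estimates the sup and seminorm parts separately, each by $C_2\|f\|$.

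The main obstacle is the well-definedness and linearity of the mean-free solution operator with the uniform constant $C_2$. This rests on the uniqueness clause of \cref{theorem:Schauder} and on the fact that \cref{theorem:SchauderHigherOrder} applies to the normalized solution. A further subtlety is that the Hölder condition is stated on $\Theta$ while the norms are on $\Thb$; I would handle this by extending to $\Thb$ through uniform continuity into the Banach space $C^{k,\alpha}(\Omb)$, which is complete.
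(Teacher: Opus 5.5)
Your proposal is correct and follows essentially the paper's argument: apply the Schauder estimate of \cref{theorem:SchauderHigherOrder} to $u(\theta)-u(\vartheta)$, which by linearity and uniqueness up to constants is the mean-free Neumann solution with right-hand side $f(\theta)-f(\vartheta)$. Because you bound the sup part by $C_2\|f\|_{\Cb(\Thb;C^{k,\alpha}(\Omb))}$ and the Hölder seminorm by $C_2[f]_{C^\beta(\Thb;C^{k,\alpha}(\Omb))}$ separately, you get the sharper constant $C_2$; the paper's factor $2C_2$ comes from bounding each of the two terms by the full norm $\|f\|_{C^\beta(\Thb;C^{k,\alpha}(\Omb))}$.
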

\begin{proof}
    First, note that for any $\theta, \vartheta \in \Theta$, the Schauder estimates from \cref{theorem:SchauderHigherOrder} apply to $u(\theta) - u(\vartheta)$ since this difference satisfies the Poisson equation and is also mean-value free.
    Therefore,
    \begin{align*}
        \|u\|_{C^\beta(\Thb; C^{2 +k, \alpha}(\Omb))}
        =& \sup_{\theta, \vartheta \in \Thb} \|u(\theta)\|_{C^{2 + k, \alpha}(\Omb)} + \sup_{\substack{\theta, \vartheta \in \Thb, \\ \theta \neq \vartheta}} \frac{\| u(\theta) - u(\vartheta)\|_{C^{2 + k, \alpha}(\Omb)}}{|\theta - \vartheta|^\beta} \\
        \leq& 2 C \|f\|_{C^\beta(\Thb; C^{k, \alpha}(\Omb))},
    \end{align*}
    where $C = C_2(\Omega, d, k, \alpha)$ is the constant from \cref{theorem:SchauderHigherOrder}.
\end{proof}
The following analogous statement can be derived on $u$ given higher Hölder regularity of $f$.
\begin{corollary}
    \label{cor:hölder-diff-u}
    Let $l \in \N$.
    With the assumptions of \cref{prop:hölder-continuity-u} and $f \in C^{l, \beta}(\Thb; C^{k, \alpha}(\Omb))$.
    Then also $u \in C^{l, \beta}(\Thb; C^{2 + k, \alpha}(\Omb))$ with $\|u\|_{C^{l, \beta}(\Thb; C^{2+k, \alpha}(\Omb))} \leq 2 C_2(\Omega, d, k, \alpha) \|f\|_{C^{l, \beta}(\Thb; C^{k, \alpha}(\Omb))}$.
\end{corollary}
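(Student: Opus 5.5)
The plan is to exploit that the solution operator of \cref{eq:Poisson} is linear and bounded. Let $X_0 \coloneq \{h \in C^{k,\alpha}(\Omb): \int_\Omega h \dif x = 0\}$ and $X_2 \coloneq \{v \in C^{2+k,\alpha}(\Omb): \int_\Omega v \dif x = 0\}$, both closed subspaces. By \cref{theorem:Schauder} and \cref{theorem:SchauderHigherOrder} with $g = 0$, the map $S: X_0 \to X_2$, $h \mapsto$ mean-value free solution, is well defined. Uniqueness up to constants makes $S$ linear. The Schauder estimate gives $\|S h\|_{C^{2+k,\alpha}} \leq C_2 \|h\|_{C^{k,\alpha}}$. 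Then $u = S \circ f$, and the task is to show that composing with a bounded linear operator preserves abstract Hölder differentiability, with the constant carried through.

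First step: derivatives. I will prove by induction on $|\bm m| \leq l$ that $u$ is Fréchet differentiable in $\theta$ with $D^{\bm m} u = S(D^{\bm m} f)$. This needs $D^{\bm m} f(\theta) \in X_0$. That holds because $h \mapsto \int_\Omega h \dif x$ is a continuous linear functional on $C^{k,\alpha}(\Omb)$ which vanishes on $f(\theta)$ for all $\theta$, so it commutes with difference quotients and limits. Since $X_0$ is closed, all derivatives stay in $X_0$.

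For the induction step, linearity gives
\[
\|S(D^{\bm m} f(\theta + h e_j)) - S(D^{\bm m} f(\theta)) - h\, S(D^{\bm m + e_j} f(\theta))\| \leq C_2 \|D^{\bm m} f(\theta + h e_j) - D^{\bm m} f(\theta) - h D^{\bm m + e_j} f(\theta)\| = o(|h|).
\]
Continuity and boundedness of $D^{\bm m} u$ on $\Thb$ follow the same way from $\|S\| \leq C_2$. This yields $\|u\|_{\Cb^l(\Thb; C^{2+k,\alpha})} \leq C_2 \|f\|_{\Cb^l(\Thb; C^{k,\alpha})}$.

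Second step: the Hölder seminorm. For $|\bm m| = l$ I use $D^{\bm m} u(\theta) - D^{\bm m} u(\vartheta) = S(D^{\bm m} f(\theta) - D^{\bm m} f(\vartheta))$, which is exactly the argument of \cref{prop:hölder-continuity-u} applied to $D^{\bm m} f$ in place of $f$. It gives $[D^{\bm m} u]_{C^\beta} \leq C_2 [D^{\bm m} f]_{C^\beta}$. Summing the sup part and the seminorm part as in \eqref{eq:abstract-hölder} gives the bound with constant $C_2 \leq 2 C_2$.

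The main obstacle is the bookkeeping rather than any estimate. One point is that \cref{theorem:SchauderHigherOrder} is stated for $\|u\|$, not for the mean-free part. This is handled because its proof normalizes to mean zero, so applying it to the mean-free differences is legitimate. The other point is making the Fréchet derivative interchange with $S$ rigorous, which is just the boundedness and linearity of $S$ on $X_0$.
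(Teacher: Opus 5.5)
Your proposal is correct and follows essentially the paper's route: linearity of the mean-value free Neumann solution map makes the parameter derivatives $D^{\bm m}u$ and their differences solve \cref{eq:Poisson} with right-hand sides $D^{\bm m}f$, and the Schauder estimate of \cref{theorem:SchauderHigherOrder} then bounds every term of the $C^{l,\beta}$ norm, just as in the preceding proposition. Your packaging via the bounded linear operator $S$ additionally justifies the Fr\'{e}chet differentiability of $u$ in $\theta$, which the paper takes for granted, and it yields the sharper constant $C_2$ in place of $2C_2$.
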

\begin{proof}
    We formalize the idea that for this order of regularity in $\theta$, we can control the parameter-derivatives of $u$ also via the Schauder estimates.
    To this end, consider
    \begin{align*}
        \|u\|_{C^{l, \beta}(\Thb;C^{k, \alpha}(\Omb))} &= \max_{|\bm m| \leq l} \sup_{\theta \in \Theta} \|D^{\bm m} u(\theta)\|_{C^{2 + k, \alpha}(\Omb)}\\
        & + \max_{|\bm m| = l} \sup_{\substack{\theta, \vartheta \in \Theta, \\ \theta \neq \vartheta}} \frac{\|D^{\bm m} u(\theta) - D^{\bm m} u(\vartheta)\|_{C^{2+k, \alpha}(\Omb)}}{|\theta - \vartheta|^\beta}.
    \end{align*}
    We have by linearity that the Fr\'{e}chet derivatives of $u$ of order 1 also satisfies \cref{eq:Poisson} with
    \begin{equation*}
        \Delta [\overline{\nabla}^{\bm m} u(\theta)] = \overline{\nabla}^{\bm{m}} f(\theta)\, \text{ on } \Omega, \quad \bm\nabla [\overline{\nabla}^{\bm m} u(\theta)] \cdot \eta = 0\, \text{ on } \partial \Omega,
    \end{equation*}
    where again $\bm m \in \N_0^q$ with $|\bm m| = 1$.
    By \cref{eq:abstract-hölder}, $\overline{\nabla}^{\bm m} f(\theta) \in C^{k, \alpha}(\Omb)$, so the Schauder estimate in \cref{theorem:SchauderHigherOrder} also holds for $\overline{\nabla}^{\bm m} u(\theta)$.
    Repetition of this argument for derivatives up to order $l$ give analogously to \cref{prop:hölder-continuity-u} $\|u\|_{C^{l, \beta}(\Thb; C^{2+k, \alpha}(\Omb))} \leq 2 C_1 \|f\|_{C^{l, \beta}(\Thb; C^{k, \alpha}(\Omb))}$.
\end{proof}
Therefore, we do indeed obtain the same parameter regularity for $\theta \mapsto u(\theta)$ as we have for $\theta \mapsto f(\theta)$.
The universal approximation results usually applied in statistical learning theory treat the regularity of the parameter variable $\theta$ and the spatial variable $x \in \Omega$ on the same footing, regarding $f$ as a function of $(\theta, x) \in \Theta \times \Omega$ with joint regularity.
One prominent example are time-dependent vector fields (where $\Thb = [0, 1]$) that are learned for NeuralODE and Flow Matching models, see \cite{chen2018neural,lipmanflow}.

Now, on the one hand, $f \in C^{l, \beta}(\Thb; C^{k, \alpha}(\Omb))$ is a rather abstract assumption on $f$ and not really in the spirit of the assumptions made e.g., in \cite[Theorem 2]{belomestny2023simultaneous}.
On the other hand, $f \in C^{k, \alpha}(\Thb \times \Omb)$ is a strictly weaker property than, e.g., $f \in C^{k, \alpha}(\Thb; C^{k, \alpha}(\Omb))$ where
\begin{equation*}
    \|f\|_{C^{k, \alpha}(I \times \Omb)} = \max_{|\bm m| \leq k}\|\partial^{\bm m} f\|_\infty + \max_{|\bm m| = k} \sup_{\substack{(\theta, x), (\vartheta, y) \in \Theta \times \Omega, \\ (\theta,x) \neq (\vartheta, y)}} \frac{|\partial^{\bm m} f(\theta,x) - \partial^{\bm m} f(\vartheta, y)|}{|(\theta,x) - (\vartheta,y)|^\alpha},
\end{equation*}
where $|\bm m| \leq k$ denotes multi-indices $\bm m \in \N_0^{q + d}$ such that $\sum_{i = 1}^{q+d}m_i \leq k$, $\partial^{\bm m}$ denotes the partial derivatives in $\Theta \times \Omega$ and $|(\theta, x)|$ is e.g., the Euclidean norm on $\Theta \times \Omega$.
Note, that $\partial^{\bm m}$ may explicitly denote a mixed derivative in $\theta$ and $x$.
The norm $\|\cdot\|_{\infty}$ is the usual supremum norm over the product $\Theta \times \Omb$.
Since \cref{eq:abstract-hölder} imposes much stricter regularity on $f$, we have the formal inclusion $C^{k, \alpha}(\Thb; C^{k, \alpha}(\Omb)) \subset C^{k, \alpha}(\Thb \times \Omb)$ with
\begin{equation}
    \label{eq:formal-hölder-inclusion}
    \|\cdot\|_{C^{k, \alpha}(\Thb \times \Omb)} \leq \| \cdot\|_{C^{k, \alpha}(\Thb; C^{k, \alpha}(\Omb))}.
\end{equation}
We next turn towards reverse inclusion if we impose higher differentiability on $\Theta \times \Omega$.

\subsection{Inclusion of Banach-Valued Hölder Spaces in Higher-Order Joint Hölder Spaces}
The goal of this section is to find conditions under which $C^{k, \alpha}(\Thb \times \Omb) \subset C^{l, \beta}(\Thb; C^{j, \gamma}(\Omb))$ where we already saw that higher order differentiability on $\Theta \times \Omega$ will be strictly necessary.
Note that classical results on parabolic Hölder spaces (see \cite{lunardi2012analytic}) are of a slightly different nature since such spaces are isotropic and, therefore, a priori do not allow for sufficient control of mixed derivatives.
We therefore investigate $f \in C^{k, \alpha}(\Thb \times \Omb)$ for $k \geq 1$ and first find the following.
\begin{proposition}
    \label{prop:joint-inclusion-in-banach}
    Let $\alpha \in (0, 1)$, $k \in \N$, $k \geq 1$, $\Omega$ be convex and fulfill assumption \ref{assumption:smooth-domain} and $f \in C^{k, \alpha}(\Thb \times \Omb)$.
    For any $\beta$ with $0 < \beta \leq \alpha$, we have $f \in C^\beta(\Thb; C^{k - 1, \alpha}(\Omb))$ with
    \begin{equation*}
        \|f\|_{C^\beta(\Thb;C^{k - 1, \alpha}(\Omb))} \leq C \|f\|_{C^{k, \alpha}(\Thb \times \Omb)}, 
    \end{equation*}
    where $C = C_4(\Omega, d, \alpha) = (2 + \sqrt{d}) (\mathrm{diam}(\Omega))^{1 - \alpha}$.
\end{proposition}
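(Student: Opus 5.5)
We estimate the two parts of $\|f\|_{C^\beta(\Thb;C^{k-1,\alpha}(\Omb))}$ separately, using only the joint derivatives $\partial^{\bm m} f$ with $|\bm m|\le k$ on $\Thb\times\Omb$.

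\emph{Sup part.} For each $\theta$, $\|f(\theta)\|_{C^{k-1,\alpha}(\Omb)}$ consists of sup norms of $\nabla^{\bm\beta} f(\theta,\cdot)$ for $|\bm\beta|\le k-1$, which are bounded by $\|f\|_{C^{k,\alpha}(\Thb\times\Omb)}$. It also contains the spatial $\alpha$-Hölder seminorm of $\nabla^{\bm\beta} f(\theta,\cdot)$ for $|\bm\beta|=k-1$. Since $\nabla^{\bm\beta}f(\theta,\cdot)$ is $C^1$ in $x$ and $\Omega$ is convex, the mean value theorem along the segment $[x,x']\subset\Omb$ gives
\[
|\nabla^{\bm\beta}f(\theta,x)-\nabla^{\bm\beta}f(\theta,x')| \le \sqrt d\,\max_{|\bm m|=k}\|\partial^{\bm m}f\|_\infty\,|x-x'| .
\]
The factor $\sqrt d$ comes from bounding the Euclidean gradient norm by the coordinate maximum. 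Writing $|x-x'|=|x-x'|^{\alpha}|x-x'|^{1-\alpha}\le |x-x'|^\alpha\,\mathrm{diam}(\Omega)^{1-\alpha}$ yields the bound $\sqrt d\,\mathrm{diam}(\Omega)^{1-\alpha}\|f\|_{C^{k,\alpha}}$.

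\emph{Hölder-in-$\theta$ part.} Set $g=f(\theta)-f(\vartheta)$. We need to bound $\|g\|_{C^{k-1,\alpha}(\Omb)}\le C|\theta-\vartheta|^\beta$. For the sup-norm pieces with $|\bm\beta|\le k-1$, the mean value theorem in $\theta$ applies, which needs convexity of $\Thb$ or a path argument. Alternatively, the top-order case uses the joint Hölder continuity of $\partial^{\bm m}f$ directly. Either way this gives $|\nabla^{\bm\beta}g(x)|\lesssim \|f\|_{C^{k,\alpha}}|\theta-\vartheta|^{\min(1,\alpha)}$, and $|\theta-\vartheta|^\alpha\le C|\theta-\vartheta|^\beta$ for $\beta\le\alpha$ on the bounded set $\Theta$.

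The delicate piece is the spatial Hölder seminorm of $h=\nabla^{\bm\beta}g$ with $|\bm\beta|=k-1$. We need
\[
|h(x)-h(x')|\le C|x-x'|^\alpha|\theta-\vartheta|^\beta,
\]
that is, a mixed Hölder estimate. We split into two cases.
\begin{itemize}
\item If $|x-x'|\le|\theta-\vartheta|$, apply the spatial mean value theorem to $h$. Its gradient is a difference of $k$-th order derivatives $\partial^{\bm m}f$ at $(\theta,\cdot)$ and $(\vartheta,\cdot)$, bounded by $\|f\|_{C^{k,\alpha}}|\theta-\vartheta|^\alpha$ through the joint Hölder condition. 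This gives $\sqrt d\|f\||\theta-\vartheta|^\alpha|x-x'|$. Then $|x-x'|\le|x-x'|^\alpha|\theta-\vartheta|^{1-\alpha}$, which produces the right powers up to diameter factors.
\item If $|x-x'|>|\theta-\vartheta|$, bound $h(x)-h(x')$ by the $\theta$-difference at $x$ and at $x'$ separately. Each is at most $\|f\|\,|\theta-\vartheta|$ by the mean value theorem in $\theta$ with $k$-th derivatives bounded. Then $|\theta-\vartheta|\le|\theta-\vartheta|^\alpha|x-x'|^{1-\alpha}\cdot$, which we rearrange using $|\theta-\vartheta|<|x-x'|$ to obtain $|\theta-\vartheta|^{\alpha}|x-x'|^{\alpha}$ times a diameter factor.
\end{itemize}
Finally, the restriction $\beta\le\alpha$ converts $|\theta-\vartheta|^\alpha$ into $|\theta-\vartheta|^\beta$.

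Summing the contributions should give a constant of the form $(2+\sqrt d)\,\mathrm{diam}(\Omega)^{1-\alpha}$. This matches the stated $C_4$, which suggests the intended argument is simpler: bound each term by $\|f\|$, with the $\sqrt d\,\mathrm{diam}^{1-\alpha}$ factor arising from the spatial mean value step. I expect the mixed Hölder bookkeeping above to be the main obstacle, and in particular tracking the exact constant including the factors relating $\mathrm{diam}(\Theta)$ and $\mathrm{diam}(\Omega)$.
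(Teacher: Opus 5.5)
Your overall plan matches the paper's. You split the norm into the $\Cb$ part and the Hölder-in-$\theta$ part, and you control the critical mixed seminorm $[f(\theta)-f(\vartheta)]_{C^{k-1,\alpha}(\Omb)}$ by the mean value theorem along $[x,x']\subset\Omb$ (convexity of $\Omega$) combined with the joint $\alpha$-Hölder continuity of the $k$-th order derivatives in $\theta$. The step that fails is the second branch of your case split. For $|x-x'|>|\theta-\vartheta|$ you only have $|h(x)-h(x')|\le c\,|\theta-\vartheta|$. This cannot yield $|h(x)-h(x')|\le C|\theta-\vartheta|^\beta|x-x'|^\alpha$ once $\alpha+\beta>1$: for $|x-x'|=2|\theta-\vartheta|=2t$ you would need $c\,t\le 2^\alpha C\,t^{\alpha+\beta}$ for all small $t>0$, which is false. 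Concretely, your passage from $|\theta-\vartheta|^\alpha|x-x'|^{1-\alpha}$ to $|\theta-\vartheta|^\alpha|x-x'|^{\alpha}$ times a diameter factor requires $|x-x'|^{1-2\alpha}$ to be bounded, which fails for $\alpha>1/2$ (e.g.\ $\beta=\alpha=3/4$).

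The fix is to drop the case distinction altogether. The estimate from your first branch,
\[
|h(x)-h(x')|\le\sqrt d\,\|f\|_{C^{k,\alpha}(\Thb\times\Omb)}\,|\theta-\vartheta|^\alpha\,|x-x'|,
\]
never used $|x-x'|\le|\theta-\vartheta|$, so it holds for all $x,x'\in\Omb$. Then $|x-x'|\le\mathrm{diam}(\Omega)^{1-\alpha}|x-x'|^\alpha$ and $|\theta-\vartheta|^\alpha\le\mathrm{diam}(\Theta)^{\alpha-\beta}|\theta-\vartheta|^\beta$ finish the term. This is exactly the paper's one-line argument, and it is where the factor $\sqrt d\,\mathrm{diam}(\Omega)^{1-\alpha}$ comes from.

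As for the constant, do not try to reproduce $C_4$; your suspicion about $\Theta$-dependent factors is justified. The paper simply asserts that the $\Cb$ part and the lower-order differences $\sup_x|\nabla^{\bm m}(f(\theta,x)-f(\vartheta,x))|/|\theta-\vartheta|^\beta$, $|\bm m|\le k-1$, are bounded by $\|f\|_{C^{k,\alpha}(\Thb\times\Omb)}$. Done honestly, they need your spatial mean value step and a mean value (or path) argument in $\theta$, producing factors like $\sqrt d\,\mathrm{diam}(\Omega)^{1-\alpha}$, $\sqrt q\,\mathrm{diam}(\Theta)^{1-\beta}$ and $\mathrm{diam}(\Theta)^{\alpha-\beta}$. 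Moreover, the stated value $(2+\sqrt d)\,\mathrm{diam}(\Omega)^{1-\alpha}$ cannot be correct in general: for $f\equiv 1$ both norms equal $1$, while $C_4<1$ when $\mathrm{diam}(\Omega)$ is small. What your repaired argument proves is the inclusion with a constant depending on $\Omega$, $\Theta$, $d$, $q$, $\alpha$ and $\beta$.
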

\begin{proof}
    We find estimates to 
    \[
    \|f\|_{C^\beta(\Thb; C^{k-1, \alpha}(\Omb))} = \|f\|_{\Cb(\Thb; C^{k-1, \alpha}(\Omb))} + [f]_{C^\beta(\Thb;C^{k-1, \alpha}(\Omb))}
    \]
    by first noticing that 
    \begin{align*}
        \|f\|&_{\Cb(\Thb; C^{k-1, \alpha}(\Omb))} =
        \sup_{\theta \in \Theta}\|f(\theta)\|_{C^{k-1, \alpha}(\Omb)} \\
        =&
        \sup_{\theta \in \Theta} \Bigg[\max_{|\bm m| \leq k-1} \sup_{x \in \Omega} |\nabla^{\bm m} f(\theta, x)| + \max_{|\bm m| = k-1} \sup_{\substack{x, y \in \Omega,\\x \neq y}} \frac{|\nabla^{\bm m} f(\theta, x) - \nabla^{\bm m} f(\theta, y)|}{|x - y|^\alpha} \Bigg]
    \end{align*}
    which is strictly bounded by $\|f\|_{C^{k, \alpha}(\Thb \times \Omb)}$.
    The Hölder coefficient
    \begin{equation*}
        [f]_{C^\beta(\Thb; C^{k-1, \alpha}(\Omb))}
        = \sup_{\substack{\theta, \vartheta \in \Theta,\\ \theta \neq \vartheta}} \frac{\|f(\theta) - f(\vartheta)\|_{C^{k-1}(\Omb)} + [f(\theta) - f(\vartheta)]_{C^{k-1, \alpha}(\Omb)}}{|\theta - \vartheta|^\beta}
    \end{equation*}
    again requires control of two different terms.
    The term involving the $C^{k-1}$ norm is again bounded by $\|f\|_{C^{k, \alpha}(\Thb \times \Omb)}$ subject to the assumed constraint that $\beta \leq \alpha$, where
    \begin{equation*}
        \sup_{\substack{\theta, \vartheta \in \Theta,\\ \theta \neq \vartheta}} \frac{\|f(\theta) - f(\vartheta)\|_{C^{k-1}(\Omb)}}{|\theta - \vartheta|^\beta} = \sup_{\substack{\theta, \vartheta \in \Theta,\\ \theta \neq \vartheta}} \frac{\max_{|\bm m|\leq k-1} \sup_{x \in \Omega} | \nabla^{\bm m} (f(\theta, x) - f(\vartheta,x))|}{|\theta - \vartheta|^\beta}.
    \end{equation*}
    Up until now, we essentially only used the fact that $f(\theta) \in C^k(\Omb)$.
    However, in order to control the second term, we will actually need the higher regularity of $f \in C^{k, \alpha}(\Thb \times \Omb)$.
    Concretely,
    \begin{align*}
        \sup_{\substack{\theta, \vartheta \in \Theta,\\ \theta \neq \vartheta}}& \frac{[f(\theta) - f(\vartheta)]_{C^{k-1, \alpha}(\Omb)}}{|\theta - \vartheta|^\beta} \\ 
        &= \sup_{\substack{(\theta,x), (\vartheta,y) \in \Theta \times \Omega,\\ (\theta,x) \neq (\vartheta,y)}} \max_{|\bm m| = k-1} \frac{|\nabla^{\bm m} [f(\theta,x) - f(\vartheta,x)] - \nabla^{\bm m} [f(\theta,y) - f(\vartheta,y)]|}{|\theta - \vartheta|^\beta |x - y|^\alpha}.
    \end{align*}
    We rearrange terms in the numerator for fixed $(\theta,x), (\vartheta,y) \in \Theta \times \Omega$ and multi-index $\bm m$ to assemble terms with the same time coordinate and apply the fundamental theorem of calculus to the smooth path $\varphi: \tau \mapsto \tau y + (1 - \tau) x$ twice to obtain
    \begin{align*}
        |\nabla^{\bm m} f(\theta,x) &- \nabla^{\bm m} f(\theta,y) - [\nabla^{\bm m} f(\vartheta,x) - \nabla^{\bm m} f(\vartheta,y)]| \\
        =&\bigg|\int_0^1 \bm \nabla [\nabla^{\bm m} f](\theta, \phi(\tau)) \cdot [y - x] - \bm \nabla[ \nabla^{\bm m} f](\vartheta, \phi(\tau)) \cdot [y - x] \dif \tau \bigg| \\
        \leq& \sup_{\xi \in \Omega} |\bm \nabla [\nabla^{\bm m} f](\theta, \xi) - \bm \nabla [\nabla^{\bm m} f](\vartheta, \xi)| \cdot |y - x|.
    \end{align*}
    Overall, we therefore have with $\mathrm{diam}(\Omega) = \sup_{x, y \in \Omega}|x - y|$
    \begin{align*}
        \sup_{\substack{\theta, \vartheta \in \Theta,\\ \theta \neq \vartheta}}& \frac{[f(\theta) - f(\vartheta)]_{C^{k-1, \alpha}(\Omb)}}{|\theta - \vartheta|^\beta} \\ 
        &\leq \sup_{\substack{(\theta,x), (\vartheta,y) \in \Theta\times \Omega,\\ (\theta,x) \neq (\vartheta,y)}} \max_{|\bm m| = k-1} \frac{\sup_{\xi \in \Omega} |\bm \nabla [\nabla^{\bm m} f](\theta, \xi) - \bm \nabla [\nabla^{\bm m} f](\vartheta, \xi)| \cdot |y - x|}{|\theta - \vartheta|^\beta |x - y|^\alpha} \\
        &\leq \sup_{\substack{\theta, \vartheta \in \Theta, \\ \theta \neq \vartheta}} \max_{|\bm m| = k} \sqrt{d} (\mathrm{diam}(\Omega))^{1 - \alpha} \sup_{\xi \in \Omega} \frac{|\nabla^{\bm m} f(\theta, \xi) - \nabla^{\bm m} f(\vartheta, \xi)|}{|\theta - \vartheta|^\beta} \\
        &\leq \sqrt{d} (\mathrm{diam}(\Omega))^{1 - \alpha} \|f\|_{C^{k, \alpha}(\Thb \times \Omb)}.
    \end{align*}
\end{proof}
The strategy in this proof can be generalized to obtain also Hölder differentiability at the cost of having less regularity in the target Banach space.
\begin{corollary}
\label{cor:joint-inclusion-diff}
    Let $\alpha \in (0, 1)$, $k \in \N$ with $k \geq 1$, $\Omega$ be convex and fulfill assumption \ref{assumption:smooth-domain} and $f \in C^{k, \alpha}(\Thb \times \Omb)$.
    Let $0 < \beta, \gamma \leq \alpha$ and $l, j \in \N_0$ be such that $k - 1 \geq l + j$, then $f \in C^{l, \beta}(\Thb; C^{j, \gamma}(\Omb))$ with 
    \begin{equation*}
        \|f\|_{C^{l, \beta}(\Thb; C^{j, \gamma}(\Omb))} \leq C \|f\|_{C^{k, \alpha}(\Thb \times \Omb)}
    \end{equation*}
    for a constant $C = C_5(\Omega, d, \alpha)$.
\end{corollary}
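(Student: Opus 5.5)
The plan is to reduce \cref{cor:joint-inclusion-diff} to \cref{prop:joint-inclusion-in-banach}, applied to mixed partial derivatives of $f$. The case $l=0$ is essentially the proposition itself; the case $l\ge1$ needs an extra step identifying Fr\'echet derivatives with pointwise ones.

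First I would identify the Fr\'echet parameter derivatives. For a multi-index $\bm m\in\N_0^q$ with $|\bm m|\le l$, the candidate for $D^{\bm m} f(\theta)$ is the function $x\mapsto \overline{\nabla}^{\bm m} f(\theta,x)$. To show this is the Fr\'echet derivative with values in $C^{j,\gamma}(\Omb)$, I would argue one derivative at a time. For $|\bm e|=1$, write the difference quotient remainder
\[
f(\theta+h\bm e)-f(\theta)-h\,\overline{\nabla}^{\bm e} f(\theta)=h\int_0^1\big[\overline{\nabla}^{\bm e} f(\theta+sh\bm e)-\overline{\nabla}^{\bm e} f(\theta)\big]\dif s .
\]
Its $C^{j,\gamma}(\Omb)$ norm is $o(|h|)$ provided $\theta\mapsto \overline{\nabla}^{\bm e} f(\theta)$ is continuous into $C^{j,\gamma}(\Omb)$. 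That continuity follows from \cref{prop:joint-inclusion-in-banach}, applied to $g=\overline{\nabla}^{\bm e} f\in C^{k-1,\alpha}(\Thb\times\Omb)$, together with the embedding $C^{k-2,\alpha}\subset C^{j,\gamma}$. This embedding is valid because $j\le k-2$ and $\gamma\le\alpha$, and $\Omega$ is bounded and convex. Iterating, each $\overline{\nabla}^{\bm m} f$ with $|\bm m|\le l$ lies in $C^{k-|\bm m|,\alpha}(\Thb\times\Omb)$. Since $k-|\bm m|-1\ge k-l-1\ge j$, I can feed it into \cref{prop:joint-inclusion-in-banach} again. Continuity of the top derivative is where $k-1\ge l+j$ is used.

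Next come the norm bounds. For $|\bm m|\le l$, \cref{prop:joint-inclusion-in-banach} applied to $\overline{\nabla}^{\bm m} f\in C^{k-|\bm m|,\alpha}(\Thb\times\Omb)$ gives
\[
\|D^{\bm m}f\|_{C^\beta(\Thb;C^{k-|\bm m|-1,\alpha}(\Omb))}\le C_4\,\|\overline{\nabla}^{\bm m} f\|_{C^{k-|\bm m|,\alpha}(\Thb\times\Omb)}\le C_4\,\|f\|_{C^{k,\alpha}(\Thb\times\Omb)}.
\]
The last inequality holds because the joint norm of a derivative is dominated by the joint norm of $f$. I would then pass to the weaker target space $C^{j,\gamma}(\Omb)$. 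On a convex bounded domain, $\|g\|_{C^{j,\gamma}}\le (1+\mathrm{diam}(\Omega)^{\alpha-\gamma})\|g\|_{C^{j',\alpha}}$ for $j'\ge j$, using the mean value theorem when $j'>j$. Taking the maximum over $|\bm m|\le l$ in \cref{eq:abstract-hölder} gives the sup terms and the $\beta$-Hölder seminorm of the top-order derivatives. The resulting constant is $C_5=(1+\max\{1,\mathrm{diam}(\Omega)\})C_4$ times a harmless factor. If needed, I would use the standard bound $|\overline{\nabla}^{\bm m} f(\theta)-\overline{\nabla}^{\bm m} f(\vartheta)|\le \mathrm{diam}(\Theta)^{1-\beta}|\theta-\vartheta|^\beta\sup|\nabla \overline{\nabla}^{\bm m} f|$ to turn Lipschitz control into Hölder control. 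That step needs convexity of $\Theta$ or a chaining argument; otherwise the Hölder estimate from the proposition already suffices.

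The main obstacle is the first step: rigorously identifying Fr\'echet derivatives with values in the Hölder space $C^{j,\gamma}(\Omb)$. Pointwise differentiability in $\theta$ does not give differentiability in a Hölder norm. The remedy above is to use the proposition's Hölder-in-$\theta$ bound on the next derivative, which is exactly why one order of joint regularity is lost per parameter derivative and why the condition $k-1\ge l+j$ is needed.
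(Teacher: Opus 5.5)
Your proposal is correct and is essentially the paper's argument: the paper likewise bounds each term of $\|f\|_{C^{l,\beta}(\Thb;C^{j,\gamma}(\Omb))}$ by rerunning the fundamental-theorem-of-calculus estimate of \cref{prop:joint-inclusion-in-banach} on the mixed derivatives $\overline{\nabla}^{\bm m} f$, whereas you apply the proposition as a black box to each $\overline{\nabla}^{\bm m} f\in C^{k-|\bm m|,\alpha}(\Thb\times\Omb)$ and then pass to the weaker target $C^{j,\gamma}(\Omb)$. Your extra first step, identifying the Fr\'echet derivatives $D^{\bm m} f$ with $\overline{\nabla}^{\bm m} f$ via continuity of the next derivative into $C^{j,\gamma}(\Omb)$, is taken for granted in the paper and is a correct gain in rigor; note only that for $j'>j$ the embedding constant comes from the mean value theorem and is of order $\sqrt{d}\,\mathrm{diam}(\Omega)^{1-\gamma}$ rather than $\mathrm{diam}(\Omega)^{\alpha-\gamma}$.
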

\begin{proof}
    Again, by the same logic as in \cref{prop:joint-inclusion-in-banach}, we find that 
    \begin{equation*}
        \|f\|_{\Cb^l(\Thb; C^{j, \gamma}(\Omb))}
        = \max_{|\bm m| \leq l} \sup_{\theta \in \Theta} \left[ \|\overline{\nabla}^{\bm m} f(\theta)\|_{C^{j, \gamma}(\Omb)} + [\overline{\nabla}^{\bm m} f(\theta)]_{C^{j, \gamma}(\Omb)} \right]
    \end{equation*}
    contains terms with mixed derivatives up to order $l + j$ which are bounded by\linebreak  $\|f\|_{C^{k, \alpha}(\Thb \times \Omb)}$.
    Next, we regard again $[f]_{C^{l, \beta}(\Thb;C^{j, \gamma}(\Omb))}$, where for $|\bm{m}| \leq l$,
    \begin{align*}
        &\sup_{\substack{\theta, \vartheta \in \Theta,\\\theta \neq \vartheta}} \frac{\|\overline{\nabla}^{\bm{m}} f(\theta) - \overline{\nabla}^{\bm{m}} f(\vartheta)\|_{C^j(\Omb)}}{|\theta - \vartheta|^\beta}
        \\&\hspace{1.5cm} = \sup_{\substack{\theta, \vartheta \in \Theta,\\\theta \neq \vartheta}} \frac{\max_{|\bm n| \leq j} \sup_{x \in \Omega} |\nabla^{\bm n} \overline{\nabla}^{\bm{m}} f(\theta, x) - \nabla^{\bm n} \overline{\nabla}^{\bm{m}} f(\vartheta, x)|}{|\theta - \vartheta|^\beta}
    \end{align*}
    also only involves derivatives of up to order $l + j$ in $\Theta \times \Omega$.
    On the other hand, to account for 
    \begin{align*}
       & \sup_{\substack{\theta, \vartheta \in \Theta,\\\theta \neq \vartheta}} \frac{[\overline{\nabla}^{\bm m} f(\theta) - \overline{\nabla}^{\bm m} f(\vartheta)]_{C^{j, \gamma}(\Omb)}}{|\theta - \vartheta|^\beta}\\
        &= \sup_{\substack{\theta, \vartheta \in \Theta,\\\theta \neq \vartheta}} \frac{1}{|\theta - \vartheta|^\beta} \max_{|\bm n| \leq j} \sup_{\substack{x, y \in \Omega, \\ x \neq y}} \frac{|\nabla^{\bm n} \overline{\nabla}^{\bm m} [f(\theta, x) - f(\vartheta, x)] - \nabla^{\bm n} \overline{\nabla}^{\bm m} [f(\theta, y) - f(\vartheta, y)]|}{|x - y|^\gamma},
    \end{align*}
    we apply the same estimate via the fundamental theorem of calculus to introduce one higher order of derivative, resulting in 
    \begin{equation*}
        \sup_{\substack{\theta, \vartheta \in \Theta,\\\theta \neq \vartheta}} \frac{[\overline{\nabla}^{\bm m} f(\theta) - \overline{\nabla}^{\bm m} f(\vartheta)]_{C^{j, \gamma}(\Omb)}}{|\theta - \vartheta|^\beta} \leq \sqrt{d} (\mathrm{diam}(\Omega))^{1 - \gamma} \|f\|_{C^{k, \alpha}(\Thb \times \Omb)}.
    \end{equation*}
\end{proof}

\subsection{Regular Solutions to the Parametric Beckmann Problem}
We can now piece together a statement similar to \cref{theorem:Ck_alphaForV,thm:regularity-xi-and-phi} at the expense of a certain degree of regularity.
We consider parameter-dependent or conditional probability densities $\rho_\mu = \rho_\mu(\theta, x)$.
\begin{theorem}
    \label{thm:regularity-time-dependent}
    Let $\alpha \in (0, 1)$, $k \in \N$, $k \geq 1$, $\Omega$ be convex and fulfill assumption \ref{assumption:smooth-domain} and $\rho_\nu, \rho_\mu \in C^{k, \alpha}(\Thb \times \Omb)$ fulfill \ref{assumption:uniform-lower-bound} for any $\theta \in \Theta$.
    % We consider $f \coloneq \rho_\nu - \rho_\mu$.
    Then, with $l \coloneq \lfloor k / 2 \rfloor$
    \begin{enumerate}[label=(\roman*)]
        \item There exists a potential field $u \in C^{l, \alpha}(\Thb; C^{l + 1, \alpha}(\Omb))$ such that with $\bm w \coloneq \bm \nabla u$, $\bm w(\theta) \in C^{l, \alpha}(\Omb;\R^d)$ fulfills \cref{eq:divergenceConstraint} for any $\theta \in \Theta$.
        Further, $\bm w \in C^{l, \alpha}(\Thb \times \Omb; \R^d)$ with $\|\bm w\|_{C^{l, \alpha}(\Thb \times \Omb;\R^d)} \leq 2 \sqrt{d} C_2 C_5 \|f\|_{C^{k, \alpha}(\Thb \times \Omb)}$.
        \item The transport vector field $\bm \xi \coloneq \bm w / f$ lies in $C^{l, \alpha}(\Thb \times \Omb; \R^d)$ with 
        \begin{equation}
            \label{eq:parametric-hölder-bound-tvf}
            \|\bm \xi\|_{C^{l, \alpha}(\Thb \times \Omb;\R^d)} \leq C \big[ \tfrac{1}{\kappa} \|f\|_{C^{k, \alpha}(\Thb \times \Omb)} \big]^{2^{l + 5}},
        \end{equation}
        where $C = C_6(\Omega, d, k, \alpha)$ is a constant.
        \item If $l \geq 1$ (i.e., $k \geq 2$), $\bm \xi$ generates a flow $\bm\Phi_{0, t}(\bm \xi)$ with flow end point $\bm\Phi(\bm \xi)$ which is a transport map $\bm\Phi(\bm \xi)_* \nu = \mu$ and $\bm\Phi_{0, t}(\bm \xi), \bm\Phi(\bm \xi) \in C^{l, \alpha}(\Omb; \R^d)$.
    \end{enumerate}
\end{theorem}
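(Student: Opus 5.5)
The plan chains the preceding results: pass from joint regularity of $f=\rho_\mu-\rho_\nu$ to Banach-valued regularity (\cref{cor:joint-inclusion-diff}), then solve the Poisson problem pointwise in $\theta$ and lift it (\cref{cor:hölder-diff-u}), then return to joint regularity via \cref{eq:formal-hölder-inclusion}. The vector field and the flow are handled as in \cref{thm:regularity-xi-and-phi}.

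For (i), apply \cref{cor:joint-inclusion-diff} with $\beta=\gamma=\alpha$ and the split $l=\lfloor k/2\rfloor$, $j=l$. The constraint $k-1\geq l+j=2l$ holds when $k$ is odd. For even $k$ I would take $j=l-1$ and use the two extra orders of spatial regularity from the Schauder estimate, so that $u$ still lands in $C^{l+1,\alpha}$. I expect the exponent bookkeeping, rather than any substantive estimate, to need care here.

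This gives $f\in C^{l,\alpha}(\Thb;C^{j,\alpha}(\Omb))$ with norm at most $C_5\|f\|_{C^{k,\alpha}(\Thb\times\Omb)}$. Since $\int_\Omega f(\theta)\dif x=0$ for every $\theta$, \cref{theorem:Ck_alphaForV} supplies for each $\theta$ the mean-free potential $u(\theta)$. \cref{cor:hölder-diff-u} (or \cref{theorem:Schauder} when $j=0$) then gives $u\in C^{l,\alpha}(\Thb;C^{j+2,\alpha}(\Omb))\subset C^{l,\alpha}(\Thb;C^{l+1,\alpha}(\Omb))$ with constant $2C_2C_5$. Taking the spatial gradient costs one spatial order and introduces a factor $\sqrt d$ from the $d$ components, so $\bm w\in C^{l,\alpha}(\Thb;C^{l,\alpha}(\Omb;\R^d))$. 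Finally \cref{eq:formal-hölder-inclusion} gives $\bm w\in C^{l,\alpha}(\Thb\times\Omb;\R^d)$ with the stated constant $2\sqrt d\,C_2C_5$. The divergence and boundary conditions \cref{eq:divergenceConstraint} hold for each fixed $\theta$ by \cref{theorem:Ck_alphaForV}.

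\textbf{Main obstacle.} Justifying \cref{eq:formal-hölder-inclusion} at level $l$ is the main gap, because joint Hölder quotients mix increments in $\theta$ and in $x$. I would split each quotient through the intermediate point $(\vartheta,x)$ and estimate
\[
|\partial^{\bm m}\bm w(\theta,x)-\partial^{\bm m}\bm w(\vartheta,y)|\leq |\partial^{\bm m}\bm w(\theta,x)-\partial^{\bm m}\bm w(\vartheta,x)|+|\partial^{\bm m}\bm w(\vartheta,x)-\partial^{\bm m}\bm w(\vartheta,y)|.
\]
Each term is then bounded using $|\theta-\vartheta|,|x-y|\leq|(\theta,x)-(\vartheta,y)|$. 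This step also needs mixed derivatives $\overline\nabla^{\bm m}\nabla^{\bm n}$ with $|\bm m|+|\bm n|\leq l$. These are covered because the Fréchet derivatives in $\theta$ take values in a spatial $C^{l,\alpha}$ space and commute with $\nabla^{\bm n}$.

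For (ii), I read $\bm\xi=\bm w/\rho$, with $\rho$ the relevant density, which is bounded below by $\kappa$ by \ref{assumption:uniform-lower-bound}. Apply the quotient and product Hölder estimates of \cite[Proposition A7]{asatryan2023convenient} on the product domain $\Thb\times\Omb$ (adjoining $t$ if the interpolation is used), exactly as in \cref{thm:regularity-xi-and-phi}(i). This gives the power-type bound in $\kappa^{-1}\|f\|$ with exponent depending on $l$, and constants are absorbed into $C_6$.

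For (iii), when $l\geq1$, fix $\theta$; then $\bm\xi(\theta)$ is Lipschitz and satisfies $\bm\xi\cdot\bm\eta=0$ on $\partial\Omega$. The argument of \cref{thm:regularity-xi-and-phi}(ii), namely global ODE existence, the continuity equation, and \cite[Theorem 2.5]{effland2020convergence} iterated, then gives the transport property and $C^{l,\alpha}$ regularity of the flow and its endpoint.
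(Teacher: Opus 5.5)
Your route is the paper's: \cref{cor:joint-inclusion-diff} to pass from joint to Banach-valued regularity, the Banach-valued Schauder lift for $u$, the inclusion of $C^{l,\alpha}(\Thb;C^{l,\alpha}(\Omb))$ into $C^{l,\alpha}(\Thb\times\Omb)$ to return to joint regularity, a quotient estimate for (ii), and the argument of \cref{thm:regularity-xi-and-phi} for (iii). Your bookkeeping of the spatial index $j$ and your reading $\bm\xi=\bm w/\rho$ are correct repairs of points the paper's short proof leaves implicit or gets wrong. In particular, the paper's ``$f\geq\kappa$'' is impossible for a mean-zero $f$.

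Splitting through $(\vartheta,x)$ is the right way to justify the joint inclusion, but it does not give constant $1$. For top-order joint multi-indices that are not of top order in $\theta$ (resp.\ in $x$) you only have Lipschitz bounds. Converting these into $\alpha$-H\"older bounds uses convexity of $\Omega$ (and a comparable property of $\Theta$) and costs diameter factors. So the inclusion holds only with a geometric constant. Already for $q=d=1$, $g(\theta,x)=\theta x$ on $(0,1)^2$ has Banach-valued $C^{1,\alpha}$ norm $1$ but joint $C^{1,\alpha}$ norm $2$. Hence the constant $2\sqrt d\,C_2C_5$ in (i) is correct only up to such a factor.

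The step that genuinely fails is the final sentence of your (ii). The quotient estimate bounds $\|\bm w/\rho\|_{C^{l,\alpha}}$ by $C\|\bm w\|_{C^{l,\alpha}}\|1/\rho\|_{C^{l,\alpha}}$. Here $\|1/\rho\|_{C^{l,\alpha}}$ is controlled by $\kappa^{-1}$ and $\|\rho\|_{C^{l,\alpha}}$, i.e.\ by the densities themselves, not by $\|f\|=\|\rho_\mu-\rho_\nu\|$.

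In fact no bound of the form $C[\kappa^{-1}\|\rho_\mu-\rho_\nu\|_{C^{k,\alpha}}]^{2^{l+5}}$ can hold. Take $\theta$-independent densities $\rho_\nu\equiv|\Omega|^{-1}$ and $\rho_\mu=\rho_\nu+\varepsilon g$, with $g$ smooth, mean zero and $g\not\equiv0$. As $\varepsilon\to0$, $\kappa\to|\Omega|^{-1}$. The field $\bm\xi=\varepsilon\bm\nabla u_g/\rho$ has sup norm bounded below by $c\,\varepsilon$ with $c>0$, since $\bm\nabla u_g\not\equiv 0$. The right-hand side, however, is $O(\varepsilon^{32})$.

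So (ii) is false as literally stated, and neither your argument nor the paper's proves it. What both actually give is the analogue of \cref{thm:regularity-xi-and-phi}(i):
$\|\bm\xi\|_{C^{l,\alpha}}\leq C\big(\kappa^{-1}\max\{\|\rho_\mu\|_{C^{k,\alpha}(\Thb\times\Omb)},\|\rho_\nu\|_{C^{k,\alpha}(\Thb\times\Omb)}\}\big)^{N}$, with $N$ depending on $l$. You should state and prove that corrected bound instead of claiming one in $\kappa^{-1}\|f\|$.
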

\begin{proof}
    This is largely analogous to \cref{theorem:Ck_alphaForV,thm:regularity-xi-and-phi}.
    For (i) note, that with $f = \rho_\mu - \rho_\nu \in C^{k, \alpha}(\Thb \times \Omb)$, \cref{cor:hölder-diff-u,cor:joint-inclusion-diff} we have existence of the potential $u \in C^{l, \alpha}(\Thb; C^{l + 1, \alpha}(\Omb))$ and hence, $\bm \nabla u \in C^{l, \alpha}(\Thb; C^{l, \alpha}(\Omb; \R^d))$ with same regularity in time and the spatial variables.
    \Cref{eq:formal-hölder-inclusion} the also lets us conclude that as a function of both variables, $\bm w \in C^{l, \alpha}(\Thb \times \Omb; \R^d)$.
    Part (ii) follows immediately with $f \geq \kappa$ uniformly and \cite[Proposition A9]{asatryan2023convenient}.
    The existence and regularity of the flow in (iii) is again analogous to \cref{thm:regularity-xi-and-phi}.
\end{proof}
The assumptions of \cref{thm:regularity-time-dependent} are more satisfactory and easier to prove than the abstract condition that $f \in C^{l, \alpha}(\Thb; C^{l, \alpha}(\Omb))$.
We do obtain that the transport vector field $\bm \xi$ which is often the target quantity of universal approximation results fulfills a joint Hölder property on $\Theta \times \Omega$, which is also how results like \cite{belomestny2023simultaneous} are usually applied.
Further, we do obtain the desired spatial regularity of the flow and flow end points in line with the regularity of the Beckmann potential $u$.
Towards controlling the model error in empirical risk minimization, we can apply the approximation result due to Belomestny et al.\ (see \cref{sec:appendix-belomestny}) which holds for deep neural networks with $\mathtt{ReQU}$ activation functions.
We immediately find the following consequence.
\begin{proposition}{($C^l$-Approximation of Transport Vector Fields)}
    Let $\alpha \in (0, 1)$, $k \in \N$ with $k \geq 1$, $\Theta = (0, 1)^q$, $\Omega$ compactly contained in $[0, 1]^d$ fulfilling assumption \ref{assumption:smooth-domain} and $\rho_\nu, \rho_\mu \in C^{k, \alpha}(\overline{\Theta} \times \overline{\Omega})$ fulfill assumption \ref{assumption:uniform-lower-bound} for any $\theta \in \Theta$.
    Let $\bm\xi$ be the transport vector field from \cref{thm:regularity-time-dependent} with $H$ the right-hand side of \cref{eq:parametric-hölder-bound-tvf}.
    Then, for any integer $K \geq 2$, there exists a $\mathtt{ReQU}$ neural network $\bm\xi_\theta:[0, 1]^{q + d} \to \R^d$ of width $\max\{4 (q+d) (K + l)^{q+d}, 12 ((K + 2l) + 1\}$, number of hidden layers
    \begin{equation*}
        6 + 2 (l - 2) + \lceil \log_2 (q + d)\rceil + 2 \max \{ \lceil \max\{\log_2( (2 l + 1) (q + d)), \log_2 \log_2 H\rceil, 1\}
    \end{equation*}
    and at most $(q + d) (K + l)^{q+d} C(l, q+d, H)$ non-zero weights taking their values in $[-1, 1]$ such that for any $m \in \{0, \ldots, l\}$,
    \begin{equation*}
        \|\bm\xi - \bm\xi_\theta\|_{C^m(\Thb \times \Omb; \R^d)} \leq 
        \frac{\widetilde{C}(q+d, l, \alpha, H)}{K^{l + \alpha - \ell}}.
    \end{equation*}
\end{proposition}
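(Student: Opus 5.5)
The argument is a direct combination of \cref{thm:regularity-time-dependent} with the Belomestny et al.\ approximation theorem recalled in \cref{sec:appendix-belomestny}. I will set $D := q+d$ and regard $\bm\xi$ as a function of the joint variable $z = (\theta, x)$ on $\Thb \times \Omb \subset [0,1]^D$.

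First, I apply \cref{thm:regularity-time-dependent}(ii). Its hypotheses hold: $\Omega$ is convex, satisfies \ref{assumption:smooth-domain}, and both densities are $C^{k,\alpha}$ on $\Thb\times\Omb$ with the lower bound \ref{assumption:uniform-lower-bound} for each $\theta$. This gives $\bm\xi \in C^{l,\alpha}(\Thb \times \Omb;\R^d)$ with $l=\lfloor k/2\rfloor$ and the norm bound $\|\bm\xi\|_{C^{l,\alpha}} \le H$ from \cref{eq:parametric-hölder-bound-tvf}.

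The approximation theorem is formulated on the whole cube $[0,1]^D$, so the second step is to extend each component $\xi_i$ to a function $E\xi_i \in C^{l,\alpha}([0,1]^D)$ whose norm is bounded by $C_E H$. The domain $\Thb\times\Omb$ is a product of a cube with a $C^{k+2,\alpha}$ domain that is compactly contained in $[0,1]^d$. I would handle the $x$-direction with a Whitney or Stein-type extension (or a reflection across the smooth boundary) followed by a cutoff; the cube factor $[0,1]^q$ needs no extension. The resulting constant depends only on $\Omega, l, \alpha$, and after enlarging $H$ by this factor I absorb it into $\widetilde C$ and $C(l,D,H)$. I expect this to be the main obstacle. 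The concern is that the extension must be bounded in the joint anisotropic sense, including mixed derivatives in $\theta$ and $x$. I would try the Whitney extension for the product domain first, since it preserves $C^{l,\alpha}$ norms on closed sets with Lipschitz, and hence smooth, boundary.

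Third, I apply the Belomestny et al.\ theorem componentwise to $E\xi_i$, with smoothness $l+\alpha$, dimension $D$, Hölder bound $H$ and approximation parameter $K\ge 2$. For each component this yields a \texttt{ReQU} network with the stated width, depth and number of nonzero weights in $[-1,1]$, satisfying $\|E\xi_i - \xi_{\theta,i}\|_{C^m([0,1]^D)} \le \widetilde C K^{-(l+\alpha-m)}$ for every $m \le l$. I then stack the $d$ components in parallel into one network $\bm\xi_\theta:[0,1]^D\to\R^d$. The depth stays the same, while the width and weight count grow by a factor $d$, which I absorb into the constants $C(l,D,H)$ and $\widetilde C$. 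Finally, restricting to $\Thb\times\Omb$ can only decrease the $C^m$ norm, which gives the claimed estimate, with the exponent understood as $l+\alpha-m$.
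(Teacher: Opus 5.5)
Your route is the paper's: extend $\bm\xi$ from $\Thb\times\Omb$ to $[0,1]^{q+d}$, apply \cref{thm:belomestny}, then restrict. The step that fails as written is the componentwise application followed by parallel stacking. Placing $d$ networks side by side multiplies the width by $d$. The width in the statement, $\max\{4(q+d)(K+l)^{q+d},12(K+2l+1)\}$, is explicit, so there is no constant to absorb that factor into. Do what the paper does and apply \cref{thm:belomestny} once to the $\R^d$-valued extension, with $p=d$ and dimension $D=q+d$. The resulting width $\max\{4D(K+l)^D,12(K+2l+1),d\}$ coincides with the claimed one because $d\le 4D(K+l)^D$. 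The weight count $d(K+l)^D C(l,D,H)$ is at most $D(K+l)^D C(l,D,H)$.

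Two further issues are shared with the paper's proof. First, \cref{thm:belomestny} is stated only for smoothness index at least $2$, so the argument covers only $l=\lfloor k/2\rfloor\ge 2$, i.e.\ $k\ge 4$. For $k\in\{1,2,3\}$ the cited theorem does not apply, and this hypothesis must be added. Second, you are right to track the extension constant $C_E$; the paper tacitly assumes a norm-preserving extension. However, $C_E$ cannot be absorbed into the depth, which contains $\log_2\log_2 H$ explicitly. Either replace $H$ by $C_E H$ there, or let $H$ denote a bound on $\|\widehat{\bm\xi}\|_{C^{l,\alpha}([0,1]^{q+d};\R^d)}$.

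As for the extension itself, it is not the obstacle you fear. The joint norm is the ordinary isotropic $C^{l,\alpha}$ norm on a subset of $\R^{q+d}$. Moreover, $\Thb\times\Omb$ is convex, since convexity of $\Omega$ is a hypothesis of \cref{thm:regularity-time-dependent}, though not restated in the proposition. Taylor's formula along segments therefore gives Whitney's compatibility conditions. Whitney's extension theorem then yields $\widehat{\bm\xi}$ with $C_E=C_E(q+d,l,\alpha)$, with no separate treatment of the $x$-direction or of mixed derivatives.
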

\begin{proof}
    First, we extend $\bm\xi \in C^{l, \alpha}(\Thb\times\Omb; \R^d)$ to $\widehat{\bm\xi} \in C^{l, \alpha}([0, 1]^{q + d}; \R^d)$ such that under restriction, $\widehat{\bm\xi}|_{\Thb\times\Omb} \equiv \bm\xi$.
    Fixing $K \geq 2$, \cref{thm:belomestny} guarantees the existence of a deep $\mathtt{ReQU}$ neural network $\bm\xi_\theta$ with the desired architecture bounds and $\|\bm\xi - \bm\xi_\theta|_{\Thb\times\Omb}\|_{C^m(\Thb\times\Omb;\R^d)} \leq \|\widehat{\bm\xi} - \bm\xi_\theta\|_{C^{m}([0, 1]^{q+d};\R^d)}$ holds.
\end{proof}

\section{Regular Transport Vector Fields for Generic Probability Paths}
\label{sec:Paths}
Our results from \cref{sec:Beckmann} are based on a particular construction of the probability evolution from $\rho_\nu$ to $\rho_\mu$, namely linear interpolation via \cref{eq:interpolateDensities}.
However, the transport from $\mu$ to $\nu$ (resp.\ $\rho_\mu$ to $\rho_\nu$) along a variety of other \emph{probability paths} $\rho: [0, 1] \to X$ has been investigated in the past, where $X$ is some Banach space of suitably regular probability density functions over the common domain $\Omega$.
In the present setting, we will again be interested in the case that for $t \in (0, 1)$, $\rho(t)$ is a probability density function in $X = C^{k, \alpha}(\Omb)$.
Next, we ask whether the construction introduced in \cref{sec:Beckmann} can also be applied in a similar manner to derive existence and regularity of a transport vector field and flow given that the prescribe that the induced probability density transport follows some probability path $\rho$.

\subsection{Generalized Setting for Probability Paths}
The natural generalization of \cref{eq:min_sq_norm_w,eq:divergenceConstraint} is that $\bm w$ is already a time-dependent vector field which fulfills for all $t \in I := (0, 1)$
\begin{equation}
    \label{eq:time-dependent-divergence}
    \bm\nabla\cdot \bm w(t) = \dot{\rho}(t), \quad \text{ on } \Omega, \qquad \bm w(t) \cdot \bm \eta = 0 \quad \text{ on } \partial\Omega
\end{equation}
and we introduce the convex minimization objective $J_{I \times \Omega}(\bm w) = \linebreak \frac{1}{2} \int_I \int_\Omega |\bm w(t, x)|^2 \dif x \, \dif t$.
The dot in \cref{eq:time-dependent-divergence} denotes the time derivative of $t \mapsto \rho(t)$.
The instantaneous rate of change equation \cref{eq:time-dependent-divergence} already requires a suitable degree of regularity of $\rho$ to be well-defined.
Reformulation into an unconstrained optimization problem and derivation of the first order optimality conditions is completely analogous to \cref{sec:Unconstrained} and yields in particular that $J_{I\times\Omega}$ can be minimized point-wise for each fixed $t$ resulting in
\begin{equation*}
    \bm\nabla\cdot \bm w(t) = f(t) \text{ and } \bm w(t) = \bm \nabla u(t) \quad \text{ a.e. on }\Omega,
\end{equation*}
where we have introduced $f = \dot{\rho}$.
Altogether, we obtain a parameter-dependent Neumann problem for $u$ of the kind investigated also in \cref{sec:parametrized-beckmann}.
In order to apply \cref{thm:regularity-time-dependent}, $\dot{\rho}$ must also fulfill the Fredholm alternative that is, $\int_\Omega \dot{\rho}(t) \dif x = 0$ for all $t \in I$.
Given a sufficiently regular solution $u$ of the Neumann problem, we may then construct the vector field $\bm \xi(t) = \bm w(t) / \rho(t) = [\bm \nabla u(t)] / \rho(t)$ and its respective flow $\bm\Phi_{0, t}(\bm \xi)$ and flow end point $\bm\Phi(\bm \xi)$.
For this construction to be well-defined, we introduce the following assumption
\begin{assumptions}
    \label{alt-assumptions-dummy}
    For a probability path $\rho$ as a function of several variables $\rho: (t, x) \mapsto \rho(t, x) \in \R$ (respectively, as a Banach-valued function $\rho: t \mapsto \rho(t) \in X$), we assume the following alternative to assumption \ref{assumption:uniform-lower-bound}:
    \begin{enumerate}[label=B2)]
        \item For all $(t, x) \in I \times \Omega$, we have that $\rho(t, x) > 0$ (respectively, $\rho(t, x) \coloneq [\rho(t)](x) > 0$) and
        \label{assumption:alternative-lower-bound}
        \begin{equation*}
            \kappa \coloneq  \inf_{t \in I} \inf_{x \in \Omega} \rho(t, x) >0.
        \end{equation*}
    \end{enumerate}
\end{assumptions}
Now, transporting $\rho(0) = \rho_\nu$ along the flow $\bm\Phi_{0, t}(\bm \xi)$ defines a probability path $\widetilde{\rho}$ with $\widetilde{\rho}(0) = \rho_\nu$.
It is well-known that $\bm \xi$ and $\widetilde{\rho}$ satisfy the transport equation
\begin{equation}
    \label{eq:continuity-equivalence-rhos}
    \frac{\partial}{\partial t} \widetilde{\rho}(t) = - \bm\nabla\cdot (\rho(t) \, \bm \xi(t))
    = - \Delta \,\, u(t) = \dot{\rho}(t), \quad \text{ a.e. on } \Omega.
\end{equation}
Since $\rho$ and $\widetilde{\rho}$ coincide at $t = 0$, we also know that the flow $\bm\Phi_{0, t}(\bm \xi)$ defines a probability path entirely along $\rho$ and we have control of the regularity of $\bm \xi$ and $\bm\Phi(\bm \xi)$ as desired.
\begin{theorem}
    \label{thm:existence-and-regularity-generic-paths}
    Let $\alpha \in (0, 1)$, $l, j \in \N$, $l \geq 1$, $\Omega$ be convex and fulfill assumption \ref{assumption:smooth-domain} and $\rho \in C^{l, \alpha}(\overline I; C^{j, \alpha}(\Omb))$ satisfy assumption \ref{assumption:alternative-lower-bound} and $\int_{\Omega} \dot{\rho}(t) \dif x = 0$ for all $t \in I$.
    Then,
    \begin{enumerate}[label=(\roman*)]
        \item There exists a potential field $u \in C^{l-1, \alpha}(\overline I;C^{j+2, \alpha}(\Omb))$ such that with $\bm w \coloneq \bm \nabla u$, $\bm w(t) \in C^{j + 1, \alpha}(\Omb; \R^d)$ fulfills \cref{eq:time-dependent-divergence} for each $t \in I$.
        \label{item:existence-u-for-paths}
        \item The transport vector field $\bm \xi = \bm w / \rho$ lies in $C^{l - 1, \alpha}(\overline I; C^{j, \alpha}(\Omb))$ and for $k \coloneq \min\{l-1, j\}$ also in $C^{k, \alpha}(\overline I \times \Omb; \R^d)$ with
        \begin{equation*}
            \|\bm \xi\|_{C^{k, \alpha}(\overline I \times \Omb; \R^d)} \leq 
            C \left( \kappa^{-1} \|\rho\|_{C^{l, \alpha}(\overline I; C^{j, \alpha}(\Omb))} \right)^{2^{k + 5}},
        \end{equation*}
        where $C = C_7(\Omega, d, k, \alpha)$.
        \label{item:regularity-vf-for-paths}
        \item For $l, j \geq 1$, $\xi$ generates a flow $\bm\Phi_{0, t}(\bm \xi) \in C^{j, \alpha}(\Omb; \Omb)$ for all $t \in I$ that generates the probability path $\rho$.
        \label{item:regularit-flow-for-paths}
    \end{enumerate}
\end{theorem}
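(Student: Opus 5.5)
The plan is to reduce everything to the parametric results of \cref{sec:parametrized-beckmann}, with $\Theta = I$ and $q = 1$, applied to $f \coloneq \dot\rho$.

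\emph{Step 1: regularity of $\dot\rho$.} By definition of $C^{l,\alpha}(\overline I; C^{j,\alpha}(\Omb))$ in \cref{eq:abstract-hölder}, the Fréchet derivative $\dot\rho = D^1\rho$ lies in $C^{l-1,\alpha}(\overline I; C^{j,\alpha}(\Omb))$, and
\[
\|\dot\rho\|_{C^{l-1,\alpha}(\overline I; C^{j,\alpha}(\Omb))} \leq \|\rho\|_{C^{l,\alpha}(\overline I; C^{j,\alpha}(\Omb))}.
\]
The assumption $\int_\Omega \dot\rho(t)\dif x = 0$ is exactly the Fredholm compatibility condition with $g = 0$.

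\emph{Step 2: part \ref{item:existence-u-for-paths}.} For each $t$, \cref{theorem:Schauder} (and \cref{theorem:SchauderHigherOrder} for $j \ge 1$) gives a unique mean-free $u(t) \in C^{j+2,\alpha}(\Omb)$. \Cref{cor:hölder-diff-u} (or \cref{prop:hölder-continuity-u} if $l = 1$), applied with $k = j$, yields $u \in C^{l-1,\alpha}(\overline I; C^{j+2,\alpha}(\Omb))$ with norm bounded by $2C_2\|\rho\|$. Since $\bm\nabla: C^{j+2,\alpha}(\Omb) \to C^{j+1,\alpha}(\Omb;\R^d)$ is bounded linear, it commutes with the Fréchet derivatives in $t$. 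Hence $\bm w \in C^{l-1,\alpha}(\overline I; C^{j+1,\alpha}(\Omb;\R^d))$, and $\bm w(t)$ satisfies \cref{eq:time-dependent-divergence}.

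\emph{Step 3: part \ref{item:regularity-vf-for-paths}.} This is the main obstacle, since quotients in a Banach-valued Hölder space must be controlled. I would first pass to joint regularity. Take $k = \min\{l-1, j\}$. The inclusion \cref{eq:formal-hölder-inclusion}, suitably generalised (a mixed derivative $\partial_t^a \nabla^{\bm n}$ with $a + |\bm n| \le k$ is bounded by the abstract norm, and the joint Hölder quotient splits by the triangle inequality into a $t$-increment and an $x$-increment, each bounded by the abstract norm), gives $\bm w, \rho \in C^{k,\alpha}(\overline I \times \Omb)$ with norms bounded by a constant times $\|\rho\|_{C^{l,\alpha}(\overline I;C^{j,\alpha}(\Omb))}$. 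Assumption \ref{assumption:alternative-lower-bound} gives $\rho \ge \kappa$, so the quotient and product estimates \cite[Propositions A7, A9]{asatryan2023convenient} yield $\bm\xi = \bm w/\rho \in C^{k,\alpha}(\overline I \times \Omb;\R^d)$. The stated power $2^{k+5}$ of $\kappa^{-1}\|\rho\|$ then follows exactly as in \cref{thm:regularity-xi-and-phi} and \cref{thm:regularity-time-dependent}, absorbing constants into $C_7$.

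For the Banach-valued claim $\bm\xi \in C^{l-1,\alpha}(\overline I; C^{j,\alpha}(\Omb))$, I would argue that $C^{j,\alpha}(\Omb)$ is a Banach algebra and that $h \mapsto 1/h$ is smooth on $\{h \ge \kappa\}$. Then $t \mapsto 1/\rho(t)$ is $C^{l,\alpha}$ by the chain rule for Fréchet derivatives, and the product of $\bm w$ (embedded in $C^{j,\alpha}$) with $1/\rho$ lies in $C^{l-1,\alpha}$ by the Leibniz rule.

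\emph{Step 4: part \ref{item:regularit-flow-for-paths}.} Here $l, j \ge 1$, so $\bm\xi$ is $C^1$ in $x$ with bounded derivative, uniformly in $t$, hence Lipschitz. Moreover $\bm\xi\cdot\bm\eta = \bm w\cdot\bm\eta/\rho = 0$ on $\partial\Omega$, so trajectories stay in $\Omb$ and global existence follows as in \cref{thm:regularity-xi-and-phi}(ii). The flow inherits $C^{j,\alpha}$ spatial regularity from \cite[Theorem 2.5]{effland2020convergence}, iterated via the variational equations, because $\bm\xi(t) \in C^{j,\alpha}(\Omb)$ uniformly in $t$. Finally, \cref{eq:continuity-equivalence-rhos} shows that the transported density $\widetilde\rho$ and $\rho$ have the same time derivative and agree at $t = 0$, so $\widetilde\rho = \rho$ and the flow generates the path.
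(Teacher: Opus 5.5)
\paragraph{Review.} Your Steps 1--3 follow the paper's proof: the parametric Schauder corollary applied to $f=\dot\rho$, then passage to the joint Hölder space, then the quotient estimate of Asatryan et al. Your Banach-algebra/chain-rule argument for $\bm\xi\in C^{l-1,\alpha}(\overline I;C^{j,\alpha}(\Omb))$ is a genuine addition, since the paper asserts this membership but only proves the joint bound.

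The gap is the last sentence of Step 4, and the paper's argument has the same defect. The density pushed forward by the flow satisfies $\partial_t\widetilde\rho=-\bm\nabla\cdot(\widetilde\rho\,\bm\xi)$, with $\widetilde\rho$ and not $\rho$ inside the divergence. The identity $\partial_t\widetilde\rho=-\bm\nabla\cdot(\rho\,\bm\xi)$ in \eqref{eq:continuity-equivalence-rhos}, which you invoke, already presupposes $\widetilde\rho=\rho$. So ``same time derivative, same initial value'' is circular.

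The sign is also wrong. With $\bm\nabla\cdot\bm w(t)=\dot\rho(t)$ as in \eqref{eq:time-dependent-divergence}, you get $-\bm\nabla\cdot(\rho\,\bm\xi)=-\bm\nabla\cdot\bm w=-\dot\rho$. Hence $\rho$ solves $\partial_t\rho-\bm\nabla\cdot(\rho\,\bm\xi)=0$, i.e.\ it is transported by $-\bm\xi$. If the flow of $\bm\xi=\bm w/\rho$ reproduced $\rho$, you would conclude $2\dot\rho=0$.

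To close this, use $\bm\xi=-\bm\nabla u/\rho$ instead, or equivalently impose $\bm\nabla\cdot\bm w=-\dot\rho$; none of your regularity statements change. Then $\rho$ itself solves $\partial_t\rho+\bm\nabla\cdot(\rho\,\bm\xi)=0$, and you must prove uniqueness rather than compare derivatives. Since $l,j\ge 1$, $\rho$ is jointly $C^1$ and $\bm\xi(t,\cdot)$ is $C^1$. Jacobi's formula gives $\partial_t\det\nabla\bm\Phi_{0,t}=(\bm\nabla\cdot\bm\xi)(t,\bm\Phi_{0,t})\det\nabla\bm\Phi_{0,t}$, and therefore
\[
\frac{\mathrm{d}}{\mathrm{d}t}\Big[\rho\big(t,\bm\Phi_{0,t}(x)\big)\det\nabla\bm\Phi_{0,t}(x)\Big]
=\Big[\partial_t\rho+\bm\nabla\cdot(\rho\,\bm\xi)\Big]\big(t,\bm\Phi_{0,t}(x)\big)\,\det\nabla\bm\Phi_{0,t}(x)=0.
\]
Hence $\rho(t,\bm\Phi_{0,t}(x))\det\nabla\bm\Phi_{0,t}(x)=\rho_\nu(x)$. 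By \eqref{eq:transformation_of_densities}, this is exactly $\rho(t)=(\bm\Phi_{0,t})_*\rho_\nu=\widetilde\rho(t)$. This last step also uses that $\bm\Phi_{0,t}$ is a bijection of $\Omb$, which follows from forward and backward invariance under the tangency condition $\bm\xi\cdot\bm\eta=0$.
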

\begin{proof}
    Item \ref{item:existence-u-for-paths} follows immediately with $\dot{\rho} \in C^{l - 1, \alpha}(\overline I; C^{j, \alpha}(\Omb))$ and \linebreak \cref{cor:hölder-diff-u}.
    For item \ref{item:regularity-vf-for-paths}, note that $\bm w \in C^{k, \alpha}(\overline I \times \Omb; \R^d)$ and $\rho \in C^{k, \alpha}(\overline I \times \Omb)$, such that with \cite[Proposition A9]{asatryan2023convenient} and assumption \ref{assumption:alternative-lower-bound}, we immediately have the desired bound with
    \begin{equation*}
        C_7(\Omega, d, k, \alpha) = (2 \max\{2 \sqrt{d} C_2 C_5, 1\})^{2^{k + 5}} \cdot \max\{1, \mathrm{diam}(I \times \Omega)^{2(1 - \alpha)}\}
    \end{equation*}
    by \cref{thm:regularity-time-dependent}.
    In order to verify item \ref{item:regularit-flow-for-paths}, first note that $\bm \xi$ is at least continuous in $t$ and in particular Lipschitz in $x$ such that the flow $\bm\Phi_{0, t}(\bm \xi) \in C^{j, \alpha}(\Omb; \Omb)$ exists analogously to before.
    Since both $\rho$ and the probability path $\widetilde{\rho}$ defined by $t \mapsto \bm\Phi_{0, t}(\bm \xi) _* \nu$ coincide at time $t = 0$ and evolve according to the transport equation \cref{eq:continuity-equivalence-rhos} in the strong sense, we have $\rho(t) = \widetilde{\rho}(t)$ for all $t \in I$.
\end{proof}

\begin{remark}
    Arguably, the nicer property to require from $\rho$ is joint Hölder continuity $\rho \in C^{k, \alpha}(\overline I \times \Omb)$.
    Then, by \cref{cor:joint-inclusion-diff}, with $k - 1 \geq l + j$, we still have $\rho \in C^{l, \alpha}(\overline I; C^{j, \alpha}(\Omb))$ and requiring $l \geq 1$, we can proceed analogously.
    If $k \geq 3$, again by $l \coloneq \lfloor k / 2 \rfloor$, we have $\bm \xi \in C^{l - 1, \alpha}(\overline I \times \Omb; \R^d)$ as a potential target for universal approximation.
    Since $\bm\Phi_{0, t}(\bm \xi)$ exists as soon as $\bm \xi$ is continuous in $t$, it is advantageous to instead set $l = 1$ and from $\bm \xi \in C^{0, \beta}(\overline I; C^{k - 2, \alpha}(\Omb))$ derive that $\bm\Phi_{0, t}(\bm \xi) \in C^{k - 2, \alpha}(\Omb;\Omb)$ for all $t \in I$.
    Note also, that analogously to \cref{sec:parametrized-beckmann}, one can introduce a parameter dependence in the probability path $\rho$.
    Requiring sufficient conditions for joint Hölder differentiability on $\overline{I}\times\Thb\times\Omb$ again allows for a jointly Hölder differentiable transport vector field along the parametric path $\rho = \rho(t, \theta, x)$.
\end{remark}

\subsection{Example: The KL Fisher-Rao Gradient Flow}
Training of generative models can be viewed also through the eyes of gradient flows~\cite{ambrosio2005gradient,ambrosio2007gradient} in which some differentiable divergence $\mathsf{d}$ between a target measure $\mu$ and a model $\widehat{\mu}$ is minimized with respect to $\widehat{\mu}$.
That is, for $\widehat{\mu} \in \mathcal{H}$, some sufficiently large space $\mathcal{H}$ like the Wasserstein space, the functional $\widehat{\mu} \mapsto \mathsf{d}(\mu\| \widehat{\mu})$ is regarded as a minimization objective or energy functional.
Following the negative (functional) gradient for the Kullback-Leibler divergence
\begin{equation*}
    - \frac{\delta}{\delta \widehat{\mu}} \mathsf{d}_{\mathrm{KL}}(\mu\|\widehat{\mu})
    = - \frac{\delta}{\delta \widehat{\mu}} \int \log \left( \frac{\mathrm{d}\mu}{\mathrm{d} \widehat{\mu}} \right) \,\mathrm{d}\mu
\end{equation*}
gives rise to a gradient descent system typically called the ``gradient flow''.
For probability measures on a compact domain with positive densities, as considered here, this gradient flow is well-defined and a solution for the density flow can be given in closed form by \cite{carrillo2024fisher,chen2026sampling}
\begin{equation*}
    \label{eq:fisher-rao-gradient-flow}
    \rho_t(x) = \frac{1}{Z_t} \rho_\nu^{1 - t}(x) \cdot \rho_\mu^t(x)\qquad \forall t \in [0,1],
\end{equation*}
where $Z_t = \int_\Omega [\rho_\nu^{1 - t}(x) \cdot \rho_\mu^t(x)] \, \mathrm{d}x$ is a time-dependent normalization constant.

Considering the strict positivity of densities we assume here, this form may be viewed as a linear interpolation of the Gibbs potentials $V_\mu = - \log(\rho_\mu)$ and $V_\nu = - \log(\rho_\nu)$ of $\rho_\mu = \mathrm{e}^{- V_\mu}$ and $\rho_\nu = \mathrm{e}^{- V_\nu}$, respectively.
That is, we have
\begin{equation*}
    \rho_t(x) = \frac{1}{Z_t} \cdot \mathrm{e}^{- V_t(x)}, \quad Z_t = \int_{\Omega} \mathrm{e}^{- V_t(x)} \, \mathrm{d}x
\end{equation*}
where $V_t = (1 - t) V_\nu + t V_\mu$.
Now, under suitable smoothness assumptions on the domain $\Omega$, the following corollary holds. 
\begin{corollary}
    Let $\alpha \in (0,1)$, $k \in \N$, $\Omega$ be convex and fulfill assumption \ref{assumption:smooth-domain}, $\rho_\nu, \rho_\mu \in C^{k, \alpha}(\Omb)$ fulfill assumptions \ref{assumption:uniform-lower-bound} and \ref{assumption:hoelder-differentiable-densities} and $\rho$ the Fisher-Rao gradient flow \eqref{eq:fisher-rao-gradient-flow}.
    Then, there is a transport vector field $\bm \xi \in C^{k, \alpha}(\overline I \times \Omb;\R^d)$ with flow $\bm\Phi_{0, t}(\bm \xi) \in C^{k, \alpha}(\Omb; \R^d)$ for all $t \in I$ that generates $\rho$.
\end{corollary}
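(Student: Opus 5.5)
The plan is to verify the hypotheses of \cref{thm:existence-and-regularity-generic-paths} for the Fisher--Rao path with the choice $j = k$ and $l = k+1$. Then $\min\{l-1, j\} = k$, so item \ref{item:regularity-vf-for-paths} gives $\bm\xi \in C^{k,\alpha}(\overline I\times\Omb;\R^d)$. Since $k\geq 1$ forces $l, j \geq 1$, item \ref{item:regularit-flow-for-paths} gives the flow $\bm\Phi_{0,t}(\bm\xi)\in C^{k,\alpha}(\Omb;\Omb)$ generating $\rho$. Convexity of $\Omega$ and assumption \ref{assumption:smooth-domain} are assumed directly. Three things remain to check: (a) $\rho\in C^{k+1,\alpha}(\overline I;C^{k,\alpha}(\Omb))$, (b) the uniform lower bound \ref{assumption:alternative-lower-bound}, and (c) $\int_\Omega\dot\rho(t)\,\mathrm{d}x=0$.

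\textbf{Steps (b) and (c).} These are elementary. Since $\kappa\le\rho_\nu,\rho_\mu\le K$ on $\Omb$, we have $\kappa\le\rho_\nu^{1-t}\rho_\mu^t\le K$ pointwise. Hence $|\Omega|\kappa\le Z_t\le |\Omega|K$, and therefore $\rho(t,x)\geq \kappa/(K|\Omega|)>0$ uniformly in $(t,x)$, which is \ref{assumption:alternative-lower-bound}. By construction $\int_\Omega\rho(t)\,\mathrm{d}x=1$ for all $t$. Once (a) is established, $t\mapsto\rho(t)$ is Fréchet differentiable into $C^{k,\alpha}(\Omb)\hookrightarrow C(\Omb)$. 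Integration over $\Omega$ is a bounded linear functional on $C(\Omb)$, so it commutes with $\frac{\mathrm d}{\mathrm dt}$, and we get $\int_\Omega\dot\rho(t)\,\mathrm{d}x=\frac{\mathrm d}{\mathrm dt}1=0$.

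\textbf{Step (a): Banach-valued regularity in $t$.} This is the main obstacle. First, $V_\nu=-\log\rho_\nu$ and $V_\mu=-\log\rho_\mu$ lie in $C^{k,\alpha}(\Omb)$. This holds because $\log$ is smooth on $[\kappa,K]$, and composing a $C^{k,\alpha}$ function with a smooth function is again $C^{k,\alpha}$ (chain rule together with the product and quotient estimates of \cite{asatryan2023convenient}). Write $g:=V_\nu-V_\mu\in C^{k,\alpha}(\Omb)$. Then
\[
\mathrm e^{-V_t}=\rho_\nu\cdot\exp(t\,g),
\]
where $\exp(tg)=\sum_n t^n g^n/n!$ is understood in the Banach algebra $C^{k,\alpha}(\Omb)$. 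The Hölder norm is submultiplicative up to a constant $c_k$, so the series converges absolutely with $\|g^n\|\le c_k^{n-1}\|g\|^n$. Consequently $t\mapsto \mathrm e^{-V_t}$ is real-analytic from $\overline I$ into $C^{k,\alpha}(\Omb)$, with Fréchet derivatives $D^n\mathrm e^{-V_t}=g^n\mathrm e^{-V_t}$ that are bounded uniformly on $\overline I$. A bounded derivative of one order higher makes every derivative of order at most $k+1$ Lipschitz, hence $\alpha$-Hölder on the bounded interval $\overline I$.

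The normalisation $Z_t=\int_\Omega \mathrm e^{-V_t}\,\mathrm{d}x$ is the image of this analytic map under a bounded linear functional. It is therefore a real-analytic scalar function, and it is bounded below by $|\Omega|\kappa$, so $1/Z_t$ is smooth on $\overline I$. Multiplying the vector-valued map by the scalar $1/Z_t$ and using the Leibniz rule gives $\rho\in C^{l,\alpha}(\overline I;C^{k,\alpha}(\Omb))$ for every $l$, in particular for $l=k+1$. This completes (a).

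With (a)--(c) in hand, \cref{thm:existence-and-regularity-generic-paths} applies with $(l,j)=(k+1,k)$ and yields the corollary. The constant in the $C^{k,\alpha}$ bound on $\bm\xi$ is the one from that theorem, with $\kappa$ replaced by $\kappa/(K|\Omega|)$.
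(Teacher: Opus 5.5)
Your argument is correct and follows the same skeleton as the paper's proof. Both verify \ref{assumption:alternative-lower-bound} with the same bound $\rho\geq\kappa/(K|\Omega|)$, the mean-zero condition on $\dot\rho$, and $\rho\in C^{k+1,\alpha}(\overline I;C^{k,\alpha}(\Omb))$, and then invoke \cref{thm:existence-and-regularity-generic-paths} with $(l,j)=(k+1,k)$. The paper makes this choice implicitly when it asserts $\dot\rho\in C^{k,\alpha}(\overline I;C^{k,\alpha}(\Omb))$.

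The genuine difference is the time-regularity step. The paper works pointwise in $x$:
\begin{itemize}
\item it differentiates $Z_t$ under the integral using majorants;
\item it notes that $t\mapsto\mathrm{e}^{-V_t(x)}$ is $C^1$ for each $x$ and that $\dot\rho_t\in C^{k,\alpha}(\Omb)$ for each $t$;
\item it bounds $\dot\rho$ and $\ddot\rho$ in sup norm;
\item it gets $\int_\Omega\dot\rho_t\,\mathrm{d}x=0$ by dominated convergence.
\end{itemize}

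You instead factor $\mathrm{e}^{-V_t}=\rho_\nu\exp(tg)$ in the Banach algebra $C^{k,\alpha}(\Omb)$. This gives Fr\'echet analyticity directly in the target space, which is what the Banach-valued hypothesis of the theorem actually requires. Pointwise-in-$x$ differentiability plus fixed-$t$ membership in $C^{k,\alpha}(\Omb)$ does not by itself give convergence of difference quotients in the $C^{k,\alpha}$ norm. Your route also delivers every time order $l$ at once.

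Handling $1/Z_t$ separately via the Leibniz rule also keeps the normalization correct. The true derivative is $\dot\rho_t=\rho_t\bigl(g-\int_\Omega g\,\rho_t\,\mathrm{d}x\bigr)$ with $g=\log(\rho_\mu/\rho_\nu)$. The paper's formula $\dot\rho_t=\rho_t\log(\rho_\mu/\rho_\nu)$ drops the mean term and in general does not integrate to zero.

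Your route costs two standard facts, both available here since $\Omega$ is bounded and convex: submultiplicativity of the $C^{k,\alpha}(\Omb)$ norm up to a constant, and $C^{k,\alpha}$-stability of composition with $\log$ on $[\kappa,K]$. The paper's route is more elementary, but its explicit bounds only control sup norms, not the H\"older norms the theorem needs.
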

\begin{proof}
    We first note that the Fisher-Rao gradient flow is smooth in time.
    To see this, note that $\kappa \leq \rho_\nu, \rho_\mu \leq K$, so \cite[Prop.~A.5, A.10]{asatryan2023convenient} guarantees for each $t \in I$ we have that $V_t \in C^{k, \alpha}(\Omb)$.
    In particular, assumption \ref{assumption:alternative-lower-bound} is fulfilled.
    Now, we show that the parameter integral $t \mapsto Z_t$ is differentiable by passing the derivative under the integral.
    To this end, $\rho_\mu^t \in L^{1/t}(\Omega, \dif x)$ and $\rho_\nu^{1-t} \in L^{1/(1-t)}(\Omega, \dif x)$ for all $t \in I$.
    Therefore, $\mathrm{e}^{- V_t}$ is majorized $t$-independently with $\|\mathrm{e}^{-V_t}\|_{L^1} \leq \|\rho_\mu^t\|_{L^{1/t}} \cdot \|\rho_\nu^{1-t}\|_{L^{1/(1-t)}} = 1$ due to Hölder's inequality.
    Further, by straight-forward computation, $\dot{\rho}_t = \rho_t \cdot \log(\rho_\mu / \rho_\nu)$, showing that $t \mapsto \mathrm{e}^{- V_t(x)}$ is $C^1$ for fixed $x \in \Omega$ and that $\dot{\rho}_t \in C^{k, \alpha}(\Omb)$ for fixed $t \in I$.
    Hence, $\dot{\rho} \in C^{k, \alpha}(\overline I; C^{k, \alpha}(\Omb))$.
    Finally, continuity on the bounded domain $\Omega$ gives a uniform upper bound $K$ allowing for 
    \begin{equation*}
        |\dot{\rho}_t(x)| = \left| \rho_t(x) \cdot \log\left( \frac{\rho_\mu(x)}{\rho_\nu(x)}\right) \right|
        \leq K \cdot \log \frac{K}{\kappa}.
    \end{equation*}
    In order to show that $f_t \coloneq \dot{\rho}_t$ also satisfies the Fredholm alternative, we require $\int_\Omega \dot{\rho}_t(x) \dif x = 0$ for all $t \in [0,1]$.
    This is immediate as soon as we can again interchange integration and differentiation.
    Again, we derive an $L^1$ majorizing constant by
    \begin{equation*}
        \|\dot{\rho}_t\|_{L^1} \leq \frac{1}{|Z_t|} \log\left( \frac{K}{\kappa}\right) \|\rho_\mu^t \rho_\nu^{1-t}\|_{L^1}
        \leq \frac{1}{\kappa \dif x(\Omb)} \cdot \log \left( \frac{K}{\kappa}\right).
    \end{equation*}
    The map $t \mapsto \dot{\rho}_t(x)$ is $C^1$ for $x \in \Omega$ and $t \in I$ since $\ddot{\rho}_t(x) = \dot{\rho}_t(x) \cdot \log(\rho_\mu(x)/\rho_\nu(x))$ is clearly continuous in $t$.
    Finally, $\ddot{\rho}_t$ has a uniform upper bound
    \begin{equation*}
        |\ddot{\rho}_t(x)| = \left| \dot{\rho}_t(x) \cdot \log \left( \frac{\rho_\mu(x)}{\rho_\nu(x)} \right)\right|
        \leq K \cdot \left(\log \left( \frac{K}{\kappa}\right)\right)^2.
    \end{equation*}
    
    Therefore, $f_t \in C^{k, \alpha}(\Omb)$ satisfies the Fredholm alternative and $\rho_t(x) \geq \linebreak \kappa (K \dif x(\Omb))^{-1}$.
    The statement now follows from \cref{thm:existence-and-regularity-generic-paths}.
\end{proof}

\section{Conclusion} \label{sec:Conclusion}
In this article, we determined the regularity of solutions to Beckmann's problem for the case of Hölder continuous, positive densities on a compact regular domain using Schauder estimates. Given that Schauder estimates are not only available for Hölder spaces, but also for Sobolev spaces \cite{agmon1959estimates}, a natural generalization of our findings would be to extend these to the Sobolev case. 

We also study the joint Hölder regularity in the data and parameter dimension of parametric optimal transport vector fields via the Beckmann approach. Thereby we provide a sound foundation for universal approximation for conditional generative learning.

In generative learning, often Gaussian noise is utilized as the source measure. But Gaussian noise has unbounded support and therefore does not fall into the class of problems considered here. A study extending our approach from bounded to unbounded domains would therefore be useful. Note that division of the flux field by densities from the probability path to obtain the  flow vector fields will yield diverging vector fields when approaching infinity and potentially these are only locally Lipschitz. While this is enough to establish the existence of local flows \cite{HeuserHarro}, new techniques to establish the global existence of flow maps for the entire time interval $t\in [0,1]$ are required. To achieve this, the absence of runaway solutions has to be established. While this is highly likely to be true, due to mass conservation in the probability path, the technical details of a proof are not obvious. 

Last but not least, Beckmann flows are an interesting option to design  new generative learning algorithms, which should be tested empirically.      

\section*{Acknowledgments}
Interesting discussions with E. Erhardt,  S.\ Neumayer, G.\ Steidl and S.\
Wang are gratefully acknowledged.

\bibliographystyle{siamplain}
\bibliography{references}

\appendix
\section{Universal Approximation with $\mathtt{ReQU}$ Neural Networks}
\label{sec:appendix-belomestny}
The main part of this work is concerned with the existence of a transport vector field $\bm\xi$ such that its flow $\bm\Phi(\bm\xi)$ is a diffeomorphic transport map $\bm\Phi(\bm\xi)_*\nu = \mu$ for $\mu$ and $\nu$ admitting $C^{k, \alpha}$-Hölder differentiable Lebesgue densities $\rho_\mu, \rho_\nu$.
For completeness, we give some additional background on the universal approximation background of our derived regularity results.

That a $C^{k, \alpha}$-Hölder differentiable transport vector field from $\rho_\nu$ to $\rho_\mu$ exists is a central assumption when applying approximation results~\cite{belomestny2023simultaneous} to prove PAC learning for empirical risk minimization schemes and bounding the resulting model error.
Control of the model error often boils down to controlling $\|\bm\xi - \bm\xi_\theta\|_C^{\ell}(\Omb)$ for some $\ell \in \N_0$, where $\bm\xi$ is the true vector field transporting $\nu$ to $\mu$ and $\bm\xi_\theta$ is from some hypothesis class $\mathcal{H}^{\bm\xi}$ of model vector fields, e.g., parametrized by deep neural networks.

The work by Belomestny et al.\ gives a constructive approximation of $C^{k, \alpha}$ vector fields with deep $\mathtt{ReQU}$ neural networks using B-splines.
Deep neural network architectures are considered fully connected with the $\mathtt{ReQU}$ activation $\sigma(x) = (\max\{0, x\})^2$.
Ultimately, the architecture constructs tensor spline bases with number of knots $K$ along each dimension which implicitly becomes an architecture hyperparameter as it can be freely chosen.

\begin{theorem}{\cite[Thm.~2]{belomestny2023simultaneous}}
    \label{thm:belomestny}
    Let $k \in \N$, $k \geq 2$, $\alpha \in (0, 1)$ and $p, d \in \N$.
    Then, for any $H > 0$, for any $f: [0, 1]^d \to \R^p$ such that $f \in C^{k, \alpha}([0, 1]^d;\R^p)$ and $\|f\|_{C^{k, \alpha}([0, 1]^d;\R^p)} \leq H$ and any integer $K \geq 2$, there exists a neural network $h_f:[0, 1]^d \to \R^p$ of width $\max\{(4d (K + k)^d, 12 ((K + 2k) + 1), p\}$, number of hidden layers
    \begin{equation*}
        6 + 2(k - 2) + \lceil \log_2d\rceil + 2 \max\big\{ \lceil\max\{\log_2(2dk + d), \log_2\log_2 H\rceil, 1\big\}
    \end{equation*}
    and at most $p (K + k)^d \cdot C(k, d, H)$ non-zero weights taking their values in $[-1, 1]$ such that for any $\ell \in \{0, \ldots, k\}$,
    \begin{equation*}
        \|f - h_f\|_{C^\ell([0, 1]^d;\R^p)} \leq (9^{d(k-1)} (2 k + 1)^{2d + \ell} + 1) \frac{(\sqrt{2} e d)^{k+\alpha} H}{K^{k + \alpha - \ell}}.
    \end{equation*}
\end{theorem}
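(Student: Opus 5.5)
Since the paper quotes this theorem from \cite{belomestny2023simultaneous}, the plan below reconstructs the argument rather than relying on anything proved here. The idea is to split the proof into two independent parts. The first is a purely approximation-theoretic statement: a tensor-product spline quasi-interpolant $Q_K f$ of degree $k$ on the uniform grid with $K$ cells per coordinate approximates $f$ simultaneously in all $C^\ell$-norms, $\ell\le k$, at rate $K^{-(k+\alpha-\ell)}H$. The second is an exact representation result: every such spline can be written exactly as a $\mathtt{ReQU}$ network of the stated size. Because the network reproduces $Q_K f$ identically, the network error equals the spline error. It suffices to treat each of the $p$ output components separately and stack them in parallel, which explains the factor $p$ in the width and in the weight count.

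\textbf{Step 1 (spline approximation).} Take the tensor B-spline basis $\{B_{\bm j}\}$ of degree $k$ on the extended knot grid, so there are $(K+k)^d$ basis functions. Define $Q_K f=\sum_{\bm j}\lambda_{\bm j}(f)B_{\bm j}$ with local dual functionals $\lambda_{\bm j}$. These functionals should reproduce polynomials of coordinate degree $\le k$, be bounded by $\|f\|_{C(\text{supp}\, B_{\bm j})}$ up to constants depending on $k,d$, and satisfy $|\lambda_{\bm j}(f)|\lesssim H$.

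On each grid cell $Q$, use polynomial reproduction to write
\[
f-Q_Kf=(f-P_Q)-Q_K(f-P_Q),
\]
where $P_Q$ is the Taylor polynomial of $f$ of order $k$ at the centre of an enlarged cell $\widetilde Q$. The Hölder remainder gives
\[
\|\nabla^{\bm\beta}(f-P_Q)\|_{C(\widetilde Q)}\lesssim H K^{-(k+\alpha-|\bm\beta|)} .
\]
The Markov/Bernstein inequality for splines gives $\|\nabla^{\bm\beta}B_{\bm j}\|_\infty\lesssim K^{|\bm\beta|}$. Combining the two with the locality of the B-splines (at most $(k+1)^d$ of them are nonzero on each cell) yields the stated bound. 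The explicit constant $(9^{d(k-1)}(2k+1)^{2d+\ell}+1)(\sqrt2 e d)^{k+\alpha}$ should come from tracking the norms of the dual functionals and the Taylor remainder on cubes of diameter $\sqrt d\,(k+1)/K$.

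\textbf{Step 2 (exact $\mathtt{ReQU}$ representation).} The basic identities are:
\begin{itemize}
\item $\sigma(x)+\sigma(-x)=x^2$;
\item $xy=\tfrac14\bigl((x+y)^2-(x-y)^2\bigr)$, so products are exact with one hidden layer;
\item identities pass through layers via $x=\tfrac14\bigl((x+1)^2-(x-1)^2\bigr)$.
\end{itemize}
A univariate B-spline is a finite combination of truncated powers $(x-t)_+^k$. Each of these equals $\sigma(x-t)\cdot(x-t)^{k-2}$, which needs $k-2$ successive multiplications, i.e.\ $2(k-2)$ layers. This gives all $d(K+k)$ univariate factors in parallel, in width of order $d(K+k)^d$. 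The $d$-fold tensor products are then formed by a binary multiplication tree of depth $\lceil\log_2 d\rceil$, and a final linear layer takes the sum with the coefficients $\lambda_{\bm j}(f)$.

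\textbf{Main obstacle.} I expect the bookkeeping under the constraint that all weights lie in $[-1,1]$ to be the hardest step. The knot positions and B-spline normalisations involve factors of size $K^k$, and the coefficients are of size up to $H$. I would handle this by rescaling all inner quantities into $[-1,1]$ and producing the large constants multiplicatively: repeated squaring of a fixed constant generates $2^{2^m}$ in $m$ layers. This is exactly the source of the term $2\max\{\lceil\max\{\log_2(2dk+d),\log_2\log_2H\}\rceil,1\}$ in the depth. Checking that this rescaling does not spoil the exactness of the representation, and that the weight count stays $p(K+k)^d C(k,d,H)$, is the delicate part; the rest is routine.
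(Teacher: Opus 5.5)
The paper does not prove this statement; it quotes it from Belomestny et al. Your skeleton is the right one: tensor-spline quasi-interpolation followed by an exact $\mathtt{ReQU}$ realization of the spline. Step 1 is sound. Your splitting $f-Q_Kf=(f-P_Q)-Q_K(f-P_Q)$ also matches the shape of the constant: the ``$+1$'' comes from $f-P_Q$ and the large factor from $Q_K(f-P_Q)$. One caveat: for $\ell=k$, a degree-$k$ spline, and hence $h_f$, is only $C^{k-1}$. That case must therefore be read with piecewise, a.e.\ derivatives (a $W^{k,\infty}$ bound), which is what your cellwise argument gives.

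The gadgets in Step 2 are correct. However, the width, depth and weight bounds are part of the statement, and your construction as written misses two of them.

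\textbf{Width.} The width is $\max\{4d(K+k)^d,12(K+2k+1),p\}$, so $p$ enters only inside the maximum, not as a factor. Running $p$ independent copies in parallel gives width $p\cdot 4d(K+k)^d$. Since the basis $\{B_{\bm j}\}$ does not depend on $f$, compute it once and share it across all components. Only the output coefficients, a $p\times(K+k)^d$ array, are component-specific, and these are the origin of $p(K+k)^d$ in the weight count. Relatedly, the univariate stage needs width linear in $K$ (of the order of the $K+2k+1$ knots), not $d(K+k)^d$; the factor $(K+k)^d$ enters only at the tensor-product stage.

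\textbf{Depth (the more serious issue).} The stated depth does not depend on $K$, which is possible for $\mathtt{ReQU}$ because products are exact. Your remedy for the weight constraint generates every large constant by repeated squaring, including the B-spline normalisation $K^k$. That costs about $2\log_2\log_2(K^kH)$ layers, so it contradicts the very depth term you claim it explains. The $K$-dependent scalings must be paid for in width, not depth:
\begin{itemize}
\item Pass in the first layers to the cardinal variable $y=Kx$, obtained by summing $O(K)$ unit-weight copies of your identity gadget.
\item Build the integer knot offsets from $O(K)$ constant units $\sigma(1)=1$.
\item Each univariate basis function is then a shifted cardinal B-spline
\[
N_k(y-j)=\tfrac{1}{k!}\sum_{i=0}^{k+1}(-1)^i\tbinom{k+1}{i}(y-j-i)_+^k ,
\]
whose coefficients are independent of $K$ (split any coefficient exceeding $1$ into several terms).
\end{itemize}
This is compatible with the univariate width $12(K+2k+1)$. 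After this change, only the coefficients $\lambda_{\bm j}(f)$, of size $C(k,d)H$, need amplification by iterated squaring. That amplification is what the term $2\max\{\lceil\max\{\log_2(2dk+d),\log_2\log_2H\}\rceil,1\}$ pays for.
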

The explicit higher regularity error bound given in this result are highly useful for flow-based generative modeling, as the Jacobian determinant of the flow map enters the transported density \eqref{eq:transformation_of_densities}, so $C^1$ control is vital.
The explicit bounds on the depth and the width can be used to adaptively increase the hypothesis space $\mathcal{H}^{\bm\xi}$ with the sample size $n \in \N$ to derive explicit sample complexity bounds for PAC learning.
\end{document}

%% file: ex_shared.tex
\usepackage{lipsum}
\usepackage[T1]{fontenc}
\usepackage{amsfonts}
\usepackage{graphicx}
\usepackage{epstopdf}
\usepackage{enumitem}
\usepackage{bm}
\usepackage{algorithmic}
\ifpdf
  \DeclareGraphicsExtensions{.eps,.pdf,.png,.jpg}
\else
  \DeclareGraphicsExtensions{.eps}
\fi
\usepackage{mathtools}

\newsiamremark{remark}{Remark}
\newsiamremark{hypothesis}{Hypothesis}
\crefname{hypothesis}{Hypothesis}{Hypotheses}
\newsiamthm{claim}{Claim}
\newsiamremark{fact}{Fact}
\crefname{fact}{Fact}{Facts}

\headers{Regularity for Beckmann Problems}{H.\ Gottschalk, and T.\ Riedlinger}

\title{%Hölder 
Regularity of Solutions to  Beckmann's Parametric Optimal Transport
}

\author{Hanno Gottschalk\thanks{Institute of Mathematics, TU Berlin (\email{\{gottschalk,riedlinger\}@math.tu-berlin.de}).}
\and Tobias J.\ Riedlinger\footnotemark[1]}

\usepackage{amsopn}

\newtheorem{assumptions}[theorem]{Assumptions}
\newcommand{\N}{\mathbb{N}}
\newcommand{\R}{\mathbb{R}}
\newcommand{\Omb}{\overline{\Omega}}
\newcommand{\Thb}{\overline{\Theta}}
\newcommand{\Cb}{C_{\mathrm{b}}}

\newcommand{\dif}{\,\mathrm{d}}